\newtheoremstyle{exercise} 
  {3pt} 
  {3pt} 
  {\small\rmfamily} 
  {
} 
  {\rmfamily\scshape} 
  {.} 
  {.5em} 
  {} 
\newtheoremstyle{newplain}
  {5pt}
  {5pt}
  {\itshape}
  {}
  {\rmfamily\scshape}
  {. ---}
  {.5em}
  {}
\newtheoremstyle{newremark}
  {5pt}
  {5pt}
  {\rmfamily}
  {}
  {\rmfamily\scshape}
  {. ---}
  {.5em}
  {}
\theoremstyle{newplain}
\newtheorem*{Theorem*}{Theorem} 
\theoremstyle{newplain}
\newtheorem{Theorem}{Theorem}
\newtheorem{Corollary}[Theorem]{Corollary}
\newtheorem{Proposition}[Theorem]{Proposition}
\newtheorem{Conjecture}[Theorem]{Conjecture}
\newtheorem{Definition}[Theorem]{Definition}
\theoremstyle{newremark}
\newtheorem{Empty}[Theorem]{}
\newtheorem{Remark}[Theorem]{Remark}
\newtheorem{Claim}[Theorem]{Claim}
\theoremstyle{exercise}
\numberwithin{Theorem}{section}
\numberwithin{Exercise}{section}
\newcommand{\ind}{\mathbbm{1}} 
\newcommand{\R}{\mathbf{R}} 
\newcommand{\Rm}{\R^m}
\newcommand{\Rn}{\R^n}
\renewcommand{\setminus}{\thicksim} 
\newcommand{\calA}{\mathscr{A}}
\newcommand{\calB}{\mathscr{B}}
\newcommand{\calC}{\mathscr{C}}
\newcommand{\calE}{\mathscr{E}}
\newcommand{\calH}{\mathscr{H}}
\newcommand{\calK}{\mathscr{K}}
\newcommand{\calL}{\mathscr{L}}
\newcommand{\calV}{\mathscr{V}}
\newcommand{\calY}{\mathscr{Y}}
\newcommand{\calZ}{\mathscr{Z}}
\newcommand{\frH}{\frak H}
\newcommand{\balpha}{\boldsymbol{\alpha}}
\newcommand{\bdelta}{\boldsymbol{\delta}}
\newcommand{\bxi}{\boldsymbol{\xi}}
\newcommand{\bB}{\mathbf{B}}
\newcommand{\bC}{\mathbf{C}}
\newcommand{\bE}{\mathbf{E}}
\newcommand{\bG}{\mathbf{G}}
\newcommand{\bN}{\mathbf{N}}
\newcommand{\bU}{\mathbf{U}}
\newcommand{\bV}{\mathbf{V}}
\newcommand{\bW}{\mathbf{W}}
\newcommand{\bY}{\mathbf{Y}}
\newcommand{\bc}{\mathbf{c}}
\newcommand{\bv}{\mathbf{v}}
\newcommand{\bw}{\mathbf{w}}
\DeclareMathOperator{\rmBdry}{\mathrm{Bdry}} 
\DeclareMathOperator{\rmClos}{\mathrm{Clos}} 
\DeclareMathOperator{\rmdiam}{\mathrm{diam}} 
\DeclareMathOperator{\rmgraph}{\mathrm{graph}} 
\DeclareMathOperator{\rmHom}{\mathrm{Hom}} 
\DeclareMathOperator{\rmid}{\mathrm{id}} 
\DeclareMathOperator{\rmim}{\mathrm{im}} 
\DeclareMathOperator{\rmInt}{\mathrm{Int}} 
\DeclareMathOperator{\rmLip}{\mathrm{Lip}} 
\DeclareMathOperator{\rmrank}{\mathrm{rank}}
\DeclareMathOperator{\rmspan}{\mathrm{span}} 
\DeclareMathOperator{\rmspt}{\mathrm{spt}} 
\DeclareMathOperator{\rmtrace}{\mathrm{trace}} 
\newcommand{\rmI}{\mathrm{I}}
\newcommand{\rmII}{\mathrm{II}}
\newcommand{\lseg}{\boldsymbol{[}\!\boldsymbol{[}}
\newcommand{\rseg}{\boldsymbol{]}\!\boldsymbol{]}}
\newcommand{\hel} {
\hskip2.5pt{\vrule height7pt width.5pt depth0pt}
\hskip-.2pt\vbox{\hrule height.5pt width7pt depth0pt}
\, }
\newcommand{\bin}[2]{
\begin{pmatrix} #1 \\
#2 
\end{pmatrix}}
\def\Xint#1{\mathchoice
{\XXint\displaystyle\textstyle{#1}}%
{\XXint\textstyle\scriptstyle{#1}}%
{\XXint\scriptstyle\scriptscriptstyle{#1}}%
{\XXint\scriptscriptstyle
\scriptscriptstyle{#1}}%
\!\int}
\def\XXint#1#2#3{{%
\setbox0=\hbox{$#1{#2#3}{\int}$}
\vcenter{\hbox{$#2#3$}}\kern-.5\wd0}}
\def\dashint{\Xint-}
\newcommand{\veps}{\varepsilon}
\newcommand{\vphi}{\varphi}
\newcommand{\wh}{\widehat}
\newcommand{\la}{\langle}
\newcommand{\ra}{\rangle}
\renewcommand{\leq}{\leqslant}
\renewcommand{\geq}{\geqslant}
\renewcommand{\subset}{\subseteq}
\newlength{\drop}
\begin{document}



\title[Lebesgue null sets]{On Lebesgue Null Sets}

\def\curraddrname{{\itshape On leave of absence from}}

\author[Th. De Pauw]{Thierry De Pauw}
\address{School of Mathematical Sciences\\
Shanghai Key Laboratory of PMMP\\ 
East China Normal University\\
500 Dongchuang Road\\
Shanghai 200062\\
P.R. of China\\
and NYU-ECNU Institute of Mathematical Sciences at NYU Shanghai\\
3663 Zhongshan Road North\\
Shanghai 200062\\
China}
\curraddr{Universit\'e Paris Diderot\\ 
Sorbonne Universit\'e\\
CNRS\\ 
Institut de Math\'ematiques de Jussieu -- Paris Rive Gauche, IMJ-PRG\\
F-75013, Paris\\
France}
\email{thdepauw@math.ecnu.edu.cn,thierry.de-pauw@imj-prg.fr}

\keywords{Lebesgue measure, Nikod\'ym set, Negligible set}

\subjclass[2010]{Primary 28A75,26B15}

\thanks{The first author was partially supported by the Science and Technology Commission of Shanghai (No. 18dz2271000).}



\begin{abstract}
Letting $A \subset \Rn$ be Borel and $\bW_0 : \Rn \to \bG(n,m)$ be Lipschitz we establish that $\calL^n(A)=0$ if and only if $\calH^m(A \cap (x+\bW_0(x))=0$ for $\calL^n$ almost every $x \in \Rn$.
\end{abstract}

\maketitle

\tableofcontents


\section{Foreword}

Let $A$ be a subset of Euclidean space $\Rn$, $n \geq 2$, and let $\calL^n$ denote the Lebesgue outer measure. We concern ourselves with the following question: Can one tell whether $A$ is Lebesgue negligible from the knowledge only of its trace on each member of some given collection of <<lower dimensional>> subsets $\Gamma_i \subset \Rn$, $i \in I$. Thus one expects that if $A \cap \Gamma_i$ is <<negligible in the dimension of $\Gamma_i$>>, for each $i \in I$ then $\calL^n(A)=0$. Of course a necessary condition is that the sets $\Gamma_i$ cover almost all of $A$, i.e. $\calL^n(A \setminus \cup_{i \in I} \Gamma_i)=0$. Consider for instance $n=2$, $I = \R$ and $\Gamma_t = \{t\} \times \R$, $t \in \R$, the collection of all vertical lines in the plane. It is not true in general that if $A \subset \R^2$ and $A \cap \Gamma_t$ is a singleton for each $t \in \R$ then $\calL^2(A)=0$. There exist indeed functions $f : \R \to \R$ whose graph $A = \rmgraph f$ has $\calL^2(A) > 0$, see e.g. \cite[Chapter 2 Theorem 4]{KHARAZISHVILI} for an example due to \textsc{W. Sierpi\'nski}. In order to rule out such examples we will henceforth assume that $A \subset \Rn$ be Borel measurable. In that case the Theorem of \textsc{G. Fubini}, together with the invariance of the Lebesgue measure under orthogonal transformations imply the following. Given an integer $1 \leq m \leq n-1$, if $(\Gamma_i)_{i \in I}$ is the collection of all $m$ dimensional affine subspaces of $\Rn$ of some fixed direction, and if $\calH^m(A \cap \Gamma_i)=0$ for all $i \in I$ then $\calL^n(A)=0$. Here $\calH^m$ denotes the $m$ dimensional Hausdorff measure. A special feature of this collection $(\Gamma_i)_{i \in I}$ is that it partitions $\Rn$, its members being the level sets $f^{-1}\{y\}$, $y \in \R^{n-m}$, of a <<nice map>> $f : \Rn \to \R^{n-m}$, indeed an orthogonal projection. This is an occurrence of the following more general situation when $f$ and its leaves $f^{-1}\{y\}$ are allowed to be nonlinear. The coarea formula due to \textsc{H. Federer} in \cite{FED.59} asserts that if $f : \Rn \to \R^{n-m}$ is Lipschitz and if $A \subset \Rn$ is Borel then
\begin{equation*}
\int_A Jf(x) d\calL^n(x) = \int_{\R^{n-m}} \calH^m\left(A \cap f^{-1}\{y \}\right) d \calL^{n-m}(y) \,.
\end{equation*} 
Thus if the Jacobian coarea factor $Jf$ is positive $\calL^n$ almost everywhere in $A$ then the collection $\left( f^{-1}\{y\} \right)_{y \in \R^{n-m}}$ is suitable for detecting whether or not $A$ is Lebesgue null. At $\calL^n$ almost all $x \in \Rn$ the map $f$ is differentiable according to \textsc{H. Rademacher}, and
\begin{equation*}
Jf(x) = \sqrt{ \left| \det \left( Df(x) \circ Df(x)^*\right)\right|} = \left\| \wedge_{n-m} Df(x) \right\|
\end{equation*}
see \cite[Chapter 3 \S 4]{EVANS.GARIEPY} and \cite[3.2.1 and 3.2.11]{GMT}.
\par 
In this paper we focus on the case when $\Gamma_i$, $i \in I$, are affine subspaces of $\Rn$, but not necessarily members of a partition of the ambient space. Specifically, we assume that with each $x \in \Rn$ is associated an $m$ dimensional affine subspace $\bW(x)$ of $\Rn$ containing $x$. Given a Borel set $A \in \Rn$, the question whether
\begin{equation}
\label{eq.1}
\textit{If } \calH^m(A \cap \bW(x))=0 \text{ for all } x \in A \textit{ then } \calL^n(A)=0 \,,
\end{equation}  
has a negative answer: \textsc{O. Nikod\'ym} \cite{NIK.27} exhibited a Borel subset $A \subset \R^2$ of the unit square, such that $\calL^2(A)=1$ and for each $x \in A$ there exists a line $\bW(x) \subset \R^2$ with the property that $A \cap \bW(x) = \{x\}$. In this context a selection Theorem due to \textsc{J. von Neumann} implies that (possibly considering a smaller, non Lebesgue null Borel subset of $A$) the correspondence $x \mapsto \bW(x)$ can be chosen to be Borel measurable (see \ref{nik.set}) and in turn, it can be chosen to be continuous according to a result of \textsc{N. Lusin}. This was noted by \textsc{A. Zygmund} in connection with multiparameter Fourier analysis. 
\par 
Our result assumes that $\bW$ be Lipschitz. Below $\bG(n,m)$ denotes the Grassmannian manifold of $m$ dimensional linear subspaces of $\Rn$.

\begin{Theorem*}
Assume $\bW_0 : \Rn \to \bG(n,m)$ is Lipschitz and $A \subset \Rn$ is Borel. The following are equivalent.
\begin{enumerate}
\item[(A)] $\calL^n(A)=0$;
\item[(B)] For $\calL^n$ almost every $x \in A$, $\calH^m\left(A \cap (x+\bW_0(x))\right)=0$;
\item[(C)] For $\calL^n$ almost every $x \in \Rn$, $\calH^m\left(A \cap (x+\bW_0(x))\right)=0$.
\end{enumerate}
\end{Theorem*}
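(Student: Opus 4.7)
My plan is to parametrize the family of affine leaves $\{x+\bW_0(x)\}_x$ locally by a Lipschitz surjection onto $\Rn$ and apply Federer's coarea formula. Fix $x_0\in\Rn$ and set $\bW:=\bW_0(x_0)$. For $x$ in a small neighborhood $U$ of $x_0$, let $\iota_x\colon\bW\to\bW_0(x)$ denote the restriction to $\bW$ of the orthogonal projection of $\Rn$ onto $\bW_0(x)$; by Lipschitz continuity of $\bW_0$, this is a Lipschitz (in $x$) family of linear isomorphisms with $\iota_{x_0}$ the inclusion $\bW\hookrightarrow\Rn$. I then define
$$\Phi : U\times B_\bW(0,R) \to \Rn, \qquad \Phi(x,v) := x + \iota_x(v).$$
Then $\Phi$ is Lipschitz; its differential at $(x,0)$ is $(u,a)\mapsto u+\iota_x(a)$, which is surjective; and shrinking $U$ and $R$, the Jacobian $J\Phi$ is bounded between two positive constants (for $\bW_0$ constant it computes to $2^{m/2}$).

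The heart of the argument will be a two-sided ``coarea mass estimate'': for $r$ small and $R=3r$,
$$r^m\,\calL^n\bigl(A\cap B(x_0,r/2)\bigr) \;\lesssim\; \int_{B(x_0,r)} \calH^m\bigl(A\cap (x+\bW_0(x))\cap B(x,3r)\bigr)\,d\calL^n(x) \;\lesssim\; r^m\,\calL^n\bigl(A\cap B(x_0,4r)\bigr),$$
with implicit constants depending only on $n,m$, and the Lipschitz constant of $\bW_0$. I will derive this by computing $\int_{B(x_0,r)\times B_\bW}\ind_A(\Phi)\,J\Phi\,d\calL^{n+m}$ in two ways. First, Fubini in $v$ followed by the affine change of variables $v\mapsto\iota_x(v)$ inside each leaf recovers (up to bounded factors) the middle integral above. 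Second, the coarea formula applied to $\Phi$ expresses the same quantity as $\int_A \calH^m(\Phi^{-1}\{y\}\cap(B(x_0,r)\times B_\bW))\,d\calL^n(y)$. The fibers $\Phi^{-1}\{y\}$ are graphs over the dual incidence sets $F_y:=\{x:y\in x+\bW_0(x)\}$. A Lipschitz implicit-function analysis of $x\mapsto\pi_{\bW_0(x)^\perp}(y-x)$, whose differential at $x=y$ equals $-\pi_{\bW_0(y)^\perp}$ (of full rank $n-m$), identifies each $F_y$ as a Lipschitz $m$-submanifold through $y$ with tangent $\bW_0(y)$, so that $\calH^m(F_y\cap B(y,r))\asymp r^m$. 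Integrating over $y\in A$ furnishes the upper and lower bounds.

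With the mass estimate the equivalences follow quickly. For (A)$\Rightarrow$(C), if $\calL^n(A)=0$ the upper bound forces the nonnegative middle integrand to vanish $\calL^n$-a.e.\ on $B(x_0,r)$, and a countable covering delivers (C) globally. The implication (C)$\Rightarrow$(B) is tautological. For the decisive (B)$\Rightarrow$(A), I argue contrapositively: suppose $\calL^n(A)>0$ and select a Lebesgue density $1$ point $x_0\in A$. The lower bound gives $\int_{B(x_0,r)} H\,d\calL^n \gtrsim r^{n+m}$, where $H(x):=\calH^m(A\cap(x+\bW_0(x))\cap B(x,3r))\leq C r^m$; a Chebyshev-type averaging then produces a set $G_r\subset B(x_0,r)$ of Lebesgue measure $\gtrsim r^n$ on which $H\geq c r^m$. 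The density $1$ condition forces $\calL^n(A\cap G_r)>0$, which exhibits a positive-measure subset of $A$ on which $\calH^m(A\cap(x+\bW_0(x)))>0$, contradicting (B).

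The main technical obstacle is that the family $x\mapsto x+\bW_0(x)$ is generally not a foliation: leaves can intersect and the distribution need not be Frobenius-integrable, so there is no Lipschitz map on $U$ whose level sets are precisely the leaves. The parametrization $\Phi$ sidesteps this because Federer's coarea formula only demands a Lipschitz surjection, not an integrable distribution. The Lipschitz hypothesis on $\bW_0$ enters essentially in two places: to ensure $J\Phi$ is bounded below on a fixed-size neighborhood, and to guarantee that the dual fibers $F_y$ are genuine Lipschitz $m$-submanifolds with uniformly positive local $\calH^m$-density.
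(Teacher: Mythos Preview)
Your proposal is essentially correct and, for the hard implication (B)$\Rightarrow$(A), takes a genuinely different and more direct route than the paper. Both you and the paper set up the same Lipschitz parametrization of the leaf bundle (your $\Phi$ is the paper's $\pi_2\circ F$, with $\iota_x$ playing the role of the local frame $\bw_1,\ldots,\bw_m$) and apply the coarea formula to it. The divergence is in how one shows the dual fibers carry enough mass. The paper introduces the Radon--Nikod\'ym derivative $\calZ_E\bW$, an auxiliary $\liminf$-of-averages function $\calY_E\bW$, a \emph{second} fibration $\hat\Sigma_r$ with an extra $\R^{n-m}$ variable, and then a Vitali covering on vertical slices of a polyball (Sections~5--6) to conclude $\calZ_E\bW>0$ a.e. You instead argue directly that each incidence set $F_y=g_y^{-1}\{0\}$ has $\calH^m(F_y\cap B(y,r))\asymp r^m$ uniformly, then close with a density-point plus Chebyshev argument. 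Your route is shorter and more geometric; the paper's route delivers, as a by-product, explicit two-sided quantitative bounds on $\calZ_E\bW$ (Propositions~4.9, 5.4, 6.8).

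Two points deserve care. First, the ``Lipschitz implicit-function analysis'' is the heart of your simplification and should be written out rather than invoked: there is no standard theorem to cite, but the auxiliary map $G_y(x):=\bigl(P_{\bW_0(y)}(x-y),\,g_y(x)\bigr)$ satisfies $\|DG_y(x)-\text{(isometry)}\|\lesssim\Lambda|x-y|$ at a.e.\ $x$, hence is bi-Lipschitz on $B(y,c/\Lambda)$ with image containing a ball of comparable radius; this yields $\calH^m(F_y\cap B(y,r))\asymp r^m$ uniformly in $y$ for $r\leq c/\Lambda$. Second, your (A)$\Rightarrow$(C) argument as written only gives $\calH^m\bigl(A\cap\bW(x)\cap B(x,3r)\bigr)=0$ for one small fixed $r$; a countable covering of the \emph{base} does not cover each \emph{leaf}. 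The fix (which is the content of the paper's Proposition~4.5(2)) is to work on $B(x_0,r)\times\bW$ rather than $B(x_0,r)\times B_\bW(0,3r)$: since $D\Phi(x,v)$ is affine in $v$ and surjective at $v=0$, the set $\{v:J\Phi(x,v)=0\}$ is a proper algebraic subset of $\bW$ for each $x$, hence $J\Phi>0$ a.e.; then $\calL^n(A)=0$ and coarea force $\ind_A\circ\Phi=0$ a.e., and Fubini in $v$ gives $\calH^m(A\cap\bW(x))=0$ for a.e.\ $x$.
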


This seems to be new. As should be apparent from the discussion above, the difficulty stands with the fact that the affine $m$ planes $\bW(x) = x + \bW_0(x)$ need not be disjointed. The natural route is to reduce the problem to applying the coarea formula by spreading out the $\bW(x)$'s in a disjointed way, in a higher dimensional space, i.e. adding a variable $u \in \bW(x)$ to the given $x \in \Rn$ and considering $\bW(x)$ as a fiber above the base space $\Rn$. We thus define
\begin{equation*}
\Sigma = \Rn \times \Rn \cap \{ (x,u) : x \in E \text{ and } u \in \bW(x) \} \,,
\end{equation*}
where $E \subset \Rn$ is Borel. This set is $n+m$ rectifiable owing to the Lipschitz continuity of $\bW$. It is convenient to assume that $\calL^n(E) < \infty$ so that
\begin{equation}
\label{eq.01}
\phi_E(B) = \int_E \calH^m \left( B \cap \bW(x) \right) d\calL^n(x) \,,
\end{equation}
$B \subset \Rn$, is a locally finite Borel measure, \ref{def.phi}. Now $\Sigma$ was precisely set up so that for each $x \in E$
\begin{equation*}
\calH^m \left( \Sigma \cap \pi_2^{-1}(B) \cap \pi_1^{-1}\{x\} \right) = \calH^m \left( B \cap \bW(x)\right) \,,
\end{equation*}
where $\pi_1$ and $\pi_2$ denote the projections of $\Rn \times \Rn$ to the $x$ and $u$ variable, respectively. Abbreviating $\Sigma_B =  \Sigma \cap \pi_2^{-1}(B)$ the coarea formula yields
\begin{equation}
\label{eq.02}
\phi_E(B) = \int_{\Sigma_B} J_\Sigma \pi_1 d\calH^{n+m} \,.
\end{equation}
A simple calculation shows that $J_\Sigma \pi_2 > 0$ $\calH^{n+m}$ almost everywhere on $\Sigma_B$, \ref{factor.pi.2}. Since also 
\begin{equation}
\label{eq.03}
\int_{\Sigma_B} J_\Sigma \pi_2 d \calH^{n+m} = \int_B \calH^m \left( \Sigma_B \cap \pi_2^{-1}\{u\}\right)d\calL^n(u)
\end{equation}
the implication $(A) \Rightarrow (C)$ above should now be clear: Letting $B=A$ and $E=\bB(0,R)$ one infers from hypothesis (A) and \eqref{eq.03} that $\calH^{n+m}(\Sigma_B) = 0$, thus $\phi_{\bB(0,R)}(A)=0$ from \eqref{eq.02} and in turn conclusion (C) from \eqref{eq.01}.
\par 
In order to establish that $(B) \Rightarrow (A)$ we need to observe that $J_\Sigma \pi_1 > 0$ $\calH^{n+m}$ almost everywhere, \ref{factor.pi.1} and ideally to show that $\calH^m \left( \Sigma_B \cap \pi_2^{-1}\{u\}\right) > 0$ for almost every $u \in B$. This last part offers some difficulty. To understand this we let $m=n-1$ in order to keep the notations short. Now $u \in \bW(x)$ iff $u-x \in \bW_0(x)$ iff $\la \bv_0(x) , x-u \ra =0$ where $\bv_0(x) \in \bW_0(x)^\perp$ is, say a unit vector. Abbreviating $g_u(x) = \la \bv_0(x) , x-u \ra$ we infer that
\begin{equation*}
\calH^m \left( \Sigma_B \cap \pi_2^{-1}\{u\}\right) = \calH^m \left( E \cap g_u^{-1} \{0\}\right)\,.
\end{equation*}
The problem remains that two of the nonlinear $m$ sets $E \cap g_u^{-1} \{0\}$ and $E \cap g_{u'}^{-1} \{0\}$ may intersect, thereby preventing another application of the coarea formula to look out for their lower bound. Yet we already know that
\begin{equation*}
\phi_E(B) = \int_B \calZ_E \bW d\calL^n
\end{equation*}
where $\calZ_E \bW$ is a Radon-Nikod\'ym derivative, \ref{def.Z} and also that $(\calZ_E \bW)(u)$ is comparable to $\calH^m \left( E \cap g_u^{-1} \{0\}\right)$, \ref{Z.1}. Adding an extra variable $y$ to the fibered space $\Sigma$, \ref{fibration.2} we improve on this by showing that
\begin{equation*}
(\calZ_E \bW)(u) \geq \beta_n \liminf_j \dashint_{-j^{-1}}^{j^{-1}} \calH^m \left( E \cap g_u^{-1} \{y\}\right) d\calL^1(y) = \beta_n (\calY_E \bW)(u)\,,
\end{equation*}
where the last equality defines $\calY_E \bW$, and $\beta_n > 0$, \ref{def.Y} and \ref{lb.2}. We are reduced to showing that $\calY_E \bW > 0$ almost everywhere. The reason why this holds is the following. Fix a Borel set $Z \subset \Rn$, $x_0 \in \Rn$ and $r > 0$. Let $\bC_\bW(x_0,r)$ denote the cylindrical box consisting of those $x \in \Rn$ such that $\left|P_{\bW_0(x_0)}(x-x_0) \right| \leq r$ and $\left|P_{\bW_0(x_0)^\perp}(x-x_0) \right| \leq r$. We want to find a lower bound for
\begin{equation*}
\int_{Z \cap \bC_\bW(x_0,r)} \left(\calY_{Z \cap \bC_\bW(x_0,r)}\right)(u) d\calL^n(u)\,.
\end{equation*}
To this end we fix $z \in \bW_0(x_0) \cap \bB(0,r)$ and we let $V_z = \Rn \cap \{ x_0 + z + s\bv_0(x_0) : -r \leq s \leq r \}$ denote the corresponding vertical line segment. According to Fubini's Theorem we are reduced to estimating
\begin{equation*}
\int_{V_z} \left(\calY_{Z \cap \bC_\bW(x_0,r)}\right)(u) d\calH^1(u) \,.
\end{equation*}
According to Vitali's Covering Theorem we can find a disjointed family of line segments $I_1,I_2,\ldots$ covering almost all $V_z$ such that the above integral nearly equals
\begin{multline*}
\sum_k \calH^1(I_k) \dashint_{I_k} \calH^m \left( Z \cap \bC_\bW(x_0,r) \cap g_{u_k}^{-1} \{y\}\right) d\calL^1(y) \\
\cong \sum_k \int_{ Z \cap \bC_\bW(x_0,r) \cap g_{u_k}^{-1} (I_k) } \left| \nabla g_{u_k}(x) \right| d\calL^n(x) \cong \calL^n \left( Z \cap \bC_\bW(x_0,r)\right)
\end{multline*}
where there first near equality follows from the coarea formula, the second one because $\left| \nabla g_{u_k} \right| \cong 1$ at small scales, \ref{jac.g} and the <<nonlinear horizontal stripes>> $g_{u_k}^{-1} (I_k)$ are nearly pairwise disjoint. Verification of these claims takes up sections 5 and 6. Now we reach a contradiction if $Z = \Rn \cap \{ \calY_E \bW = 0\}$ is assumed to have $\calL^n(Z) > 0$ and $x_0$ is a point of density of $Z$.
\par 
I extend my warm thanks to \textsc{Jean-Christophe Léger} for carefully reading the manuscript.

\section{Preliminaries}

\begin{Empty}
In this paper $1 \leq m \leq n-1$ are integers. The ambient space is $\Rn$. The canonical inner product of $x,x' \in \Rn$ is denoted $\la x,x' \ra$ and the corresponding Euclidean norm of $x$ is $|x|$. If $S \subset \Rn$ we let $\calB(S)$ denote the $\sigma$ algebra of Borel subsets of $S$.
\end{Empty}

\begin{Empty}[Hausdorff measure]
\label{h.m}
We let $\calL^n$ denote the Lebesgue outer measure in $\Rn$ and $\balpha(n) = \calL^n(\bB(0,1))$. For $S \subset \Rn$ we abbreviate $\zeta^m(S) = \balpha(m) 2^{-m}(\rmdiam S)^m$. Given $0 < \delta \leq \infty$ we call $\delta$ cover of $A \subset \Rn$ a finite or countable family $(S_j)_{j \in J}$ of subsets of $\Rn$ such that $A \subset \cup_{j \in J} S_j$ and $\rmdiam S_j \leq \delta$ for every $j \in J$. We define
\begin{equation*}
\calH^m_\delta(A) = \inf \left\{ \sum_{j \in J} \zeta^m(S_j) : (S_j)_{j \in J} \text{ is a $\delta$ cover of $A$ }\right\}
\end{equation*}
and $\calH^m(A) = \lim_{\delta \to 0^+} \calH^m_\delta(A) = \sup_{\delta > 0} \calH^m_\delta(A)$. Thus $\calH^m$ is the $m$ dimensional Hausdorff outer measure in $\Rn$. 
\par 
{\it 
\begin{enumerate}
\item[(1)] If $(K_k)_k$ is a sequence of nonempty compact subsets of $\Rn$ converging in Hausdorff distance to $K$ then $\calH^m_\infty(K) \geq \limsup_k \calH^m_\infty(K_k)$.
\end{enumerate}
}
\par 
Given $\veps > 0$ choose a cover $(S_j)_{j=1,2,\ldots}$ of $K$ such that $\calH^m_\infty(K) + \veps \geq \sum_j \zeta^m(S_j)$. Since $\lim_{r \to 0^+} \zeta^m(\bU(S_j,r))=\zeta^m(S_j)$ for each $j=1,2,\ldots$ we can choose an open set $U_j$ containing $S_j$ such that $\zeta^m(U_j) \leq  \veps 2^{-j} + \zeta^m(S_j)$. Since $U = \cup_j U_j$ is open there exists an integer $k_0$ such that $K_k \subset U$ whenever $k \geq k_0$. Thus in that case $(U_j)_j$ is a cover of $K_k$ and therefore $\calH^m_\infty(K_k) \leq \sum_j \zeta^m(U_j) \leq 2\veps + \calH^m_\infty(K)$. Take the $\limsup$ of the left hand side as $k \to \infty$, and then let $\veps \to 0$.
\par 
{\it 
\begin{enumerate}
\item[(2)] For all $A \subset \Rm$ one has $\calL^m(A) = \calH^m(A) = \calH^m_\infty(A)$.
\end{enumerate}
}
\par
It suffices to note that $\calH^m(A) \geq \calH^m_\infty(A) \geq \calL^m(A) \geq \calH^m(A)$. The first inequality is trivial; the second one follows from the isodiametric inequality \cite[2.10.33]{GMT}; the last one is a consequence of the Vitali Covering Theorem \cite[Chapter 2 \S 2 Theorem 2]{EVANS.GARIEPY}.
\par 
{\it 
\begin{enumerate}
\item[(3)] If $W \subset \Rn$ is an $m$ dimensional affine subspace and $A \subset W$ then $\calH^m(A) = \calH^m_\infty(A)$.
\end{enumerate}
}
\par
Let $\frH^m$ denote the $m$ dimensional Hausdorff outer measures in the metric space $W$. In other words 
\begin{equation*}
\frH^m_\delta(A) = \inf \left\{ \sum_{j \in J} \zeta^m(S_j) : (S_j)_{j \in J} \text{ is a $\delta$ cover of $A$ and $S_j \subset W$ for all $j \in J$}\right\} \,,
\end{equation*}
and $\frH^m(A) = \sup_{\delta > 0} \frH^m_\delta(A)$.
It is elementary to observe that $\frH^m(A) = \calH^m(A)$ and that $\frH^m_\infty(A) = \calH^m_\infty(A)$. Now if $f : W \to \Rm$ is an isometry then $\calH^m(A) = \frH^m(A) =  \calH^m(f(A)) = \calH^m_\infty(f(A)) = \frH^m_\infty(A) = \calH^m_\infty(A)$, where the third equality follows from claim (2) above.
\end{Empty}

\begin{Empty}[Coarea formula]
\label{coarea}
Here we recall two versions of the coarea formula. First if $A \subset \Rn$ is $\calL^n$ measurable and $f : A \to \R^{n-m}$ is Lipschitz then $\R^{n-m} \to [0,\infty] : y \mapsto \calH^m \left( A \cap f^{-1}\{y\} \right)$ is $\calL^{n-m}$ measurable and 
\begin{equation*}
\int_A Jf(x) d\calL^n(x) = \int_{\R^{n-m}} \calH^m\left(A \cap f^{-1}\{y \}\right) d \calL^{n-m}(y) \,.
\end{equation*} 
Here the coarea Jacobian factor is well defined $\calL^n$ almost everywhere according to Rademacher's Theorem and equals
\begin{equation*}
Jf(x) = \sqrt{ \left| \det \left( Df(x) \circ Df(x)^*\right)\right|} = \left\| \wedge_{n-m} Df(x) \right\| \,,
\end{equation*}
see for instance \cite[Chapter 3 \S 4]{EVANS.GARIEPY}. 
\par 
Secondly if $A \subset \R^p$ is $\calH^n$ measurable and countably $(\calH^n,n)$ rectifiable, and if $f : A \to \R^{n-m}$ is Lipschitz then $\R^{n-m} \to [0,\infty] : y \mapsto \calH^m \left( A \cap f^{-1}\{y\} \right)$ is $\calL^{n-m}$ measurable and
\begin{equation*}
\int_A J_Af(x) d\calH^n(x) = \int_{\R^{n-m}} \calH^m\left(A \cap f^{-1}\{y \}\right) d \calL^{n-m}(y) \,.
\end{equation*} 
To give a formula for the coarea Jacobian factor $J_Af(x)$ of $f$ relative to $A$ we consider a point $x \in A$ where $A$ admits an approximate $n$ dimensional tangent space $T_xA$ and where $f$ is differentiable along $A$. Letting $L : T_x A \to \R^{n-m}$ denote the derivative of $f$ at $x$ we have
\begin{equation*}
J_Af(x) = \sqrt{ \left| \det \left( L \circ L^*\right)\right|} = \left\| \wedge_{n-m} L \right\| \,,
\end{equation*}
see for instance \cite[3.2.22]{GMT}.
\par 
In both cases it is useful to recall the following. If $L : V \to V'$ is a linear map between two inner product spaces $V$ and $V'$ then
\begin{equation}
\label{eq.11}
\|\wedge_k L \| = \sup \left\{ \la \wedge_k L , \xi \ra : \xi \in \wedge_k V \text{ and } |\xi| = 1 \right\} \,.
\end{equation}
On the one hand $\|\wedge_k L\| \leq \|L\|^k$ \cite[1.7.6]{GMT}, and $\|L\| \leq \rmLip f$ with $L$ as above. On the other hand if $v_1,\ldots,v_k$ are linearly independent vectors of $V$ then 
\begin{equation}
\label{eq.12}
\|\wedge_k L \| \geq \frac{|L(v_1)\wedge \ldots \wedge L(v_k)|}{|v_1 \wedge \ldots \wedge v_k|} \,.
\end{equation}
\par 
Finally we observe that both coarea {formul\ae} hold true when $f$ is merely locally Lipschitz, according to the Monotone Convergence Theorem.
\end{Empty}

\begin{Empty}[Grassmannian]
\label{grassmann}
We let $\bG(n,m)$ denote the set whose members are the $m$ dimensional linear subspaces of $\Rn$. With $W \in \bG(n,m)$ we associate $P_W : \Rn \to \Rn$ the orthogonal projection onto $W$. We give $\bG(n,m)$ the structure of a compact metric space by letting $d(W_1,W_2) = \|P_{W_1}-P_{W_2}\|$. If $W \in \bG(n,m)$ then $W^\perp \in \bG(n-m)$ is so that $P_W + P_{W^\perp}=\rmid_{\Rn}$, therefore $\bG(n,m) \to \bG(n,n-m) : W \mapsto W^\perp$ is an isometry. The bijective correspondence $\vphi :\bG(n,m) \to \rmHom(\Rn,\Rn) : W \mapsto P_W$ identifies $\bG(n,m)$ with the submanifold $M_{n,m} = \rmHom(\Rn,\Rn) \cap \{ L : L \circ L = L\,,\, L^*=L \text{ and } \rmtrace L = m \}$. There exists an open neighborhood $V$ of $M_{n,m}$ in $\rmHom(\Rn,\Rn)$ and a Lipschitz retraction $\rho : V \to M_{n,m}$, according for instance to \cite[3.1.20]{GMT}. Therefore if $S \subset \Rn$ and if $\bW_0 : S \to \bG(n,m)$ is Lipschitz then there exist an open neighborhood $U$ of $E$ in $\Rn$ and a Lipschitz extension $\wh{\bW}_0 : U \to \bG(n,m)$ of $\bW_0$. Indeed $\vphi \circ \bW_0$ admits a Lipschitz extension $\bY : \Rn \to \rmHom(\Rn,\Rn)$, see e.g. \cite[2.10.43]{GMT}, and it suffices to let $U = \bY^{-1}(V)$ and $\wh{\bW}_0 = \rho \circ \left( \bY |_U \right)$.
\end{Empty}

\begin{Empty}[Orthonormal frames]
\label{stiefel}
We let $\bV(n,m)$ denote the set orthonormal $m$ frames in $\Rn$, i.e. $\bV(n,m) = (\Rn)^{m} \cap\{ (w_1,\ldots,w_{m}) : \text{ the family } w_1,\ldots,w_{m} \text{ is orthonormal}\}$. We will consider it as a metric space with its structure inherited from $(\Rn)^{m}$. 
\end{Empty}

\begin{Empty}
\label{grass.param.stief}
{\it 
Let $\calV \subset \bG(n,m)$ be a nonempty closed set such that $\rmdiam \calV < 1$. There exists a Lipschitz map $\Xi : \calV  \to \bV(n,m)$ such that $W = \rmspan \{ \Xi_1(W),\ldots,\Xi_m(W)\}$ for every $W \in \calV$. 
}
\end{Empty}

\begin{proof}
Pick arbitrarily $W_0 \in \calV$. If $W \in \calV$ then the map $W_0 \to W : w \mapsto P_W(w)$ is bijective: if $w \in W_0 \setminus \{0\}$ then $\left|P_W(w)-w\right| = \left|P_W(w)-P_{W_0}(w)\right| < |w|$ thus $P_W(w) \neq 0$. Letting $w_1,\ldots,w_m$ be an arbitrary basis of $W_0$ it follows that for each $W \in \calV$ the vectors $w_i(W) = P_W(w_i)$, $i=1,\ldots,m$, constitute a basis of $W$. Furthermore the maps $w_i : \calV \to \Rn$ are Lipschitz: $\left| w_i(W)-w_i(W')\right| = \left|P_W(w_i) - P_{W'}(w_i)\right| \leq d(W,W') |w_i|$. We apply the Gram-Schmidt process:
\begin{equation*}
\overline{w}_1(W) = w_1(W) \quad\text{ and }\quad \overline{w}_i(W) = w_i(W) - \sum_{j=1}^{i-1} \la w_i(W) , \overline{w}_j(W) \ra \overline{w}_j(W) \,,\,\, i=2,\ldots,m,
\end{equation*}
so that $\overline{w}_1(W),\ldots,\overline{w}_m(W)$ is readily an orthogonal basis of $W$ depending upon $W$ in a Lipschitz way. Since each $\left|\overline{w}_i\right|$ is bounded away from zero on $\calV$ the formula $\Xi_i(W) = \left|\overline{w}_i(W)\right|^{-1}\overline{w}_i(W)$, $i=1,\ldots,m$, defines $\Xi$ with the required property.
\end{proof}

\begin{Empty}
\label{grass.param.stief.borel}
{\it 
There exists a Borel measurable map $\Xi : \bG(n,m)  \to \bV(n,m)$ with the property that $W = \rmspan \{ \Xi_1(W),\ldots,\Xi_m(W)\}$ for every $W \in \bG(n,m)$. 
}
\end{Empty}

\begin{proof}
Since $\bG(n,m)$ is compact it can partitioned into finitely many Borel sets $\calV_1,\ldots,\calV_J$ each having diameter bounded by $1/2$. Define $\Xi$ piecewise to coincide on $\calV_j$ with a $\Xi_j$ associated with $\rmClos \calV_j$ in \ref{grass.param.stief}, $j=1,\ldots,J$.
\end{proof}

\begin{Empty}
\label{orth.frame}
{\it 
Assume $S \subset \Rn$, $x_0 \in S$ and $\bW_0 : S \to \bG(n,m)$ is Lipschitz. There then exist an open neighbordhood $U$ of $x_0$ in $\Rn$ and Lipschitz maps $\bw_1,\ldots,\bw_m,\bv_1,\ldots,\bv_{n-m} : U \to \Rn$ such that:
\begin{enumerate}
\item[(1)] For every $x \in U$ the family $\bw_1(x),\ldots,\bw_m(x),\bv_1(x),\ldots,\bv_{n-m}(x)$ is an orthonormal basis of $\Rn$;
\item[(2)] For every $x \in S \cap U$ one has
\begin{equation*}
\bW_0(x) = \rmspan \{ \bw_1(x),\ldots,\bw_m(x) \}
\end{equation*}
and
\begin{equation*}
\bW_0(x)^\perp = \rmspan \{ \bv_1(x),\ldots,\bv_{n-m}(x) \} \,.
\end{equation*}
\end{enumerate}
}
\end{Empty}

\begin{proof}
We let $\wh{\bW}_0 : \wh{U} \to \bG(n,m)$ be a Lipschitz extension of $\bW_0$ where $\wh{U}$ is an open neighborhood of $S$ in $\Rn$ (recall \ref{grassmann}). Abbreviate $W_0 := \bW_0(x_0)$. Define $\calV = \bG(n,m) \cap \left\{ W : d(W,W_0) < 1/4 \right\}$ and $V = \wh{U} \cap \wh{\bW}_0^{-1}(\calV)$. Apply \ref{grass.param.stief} to $\rmClos \calV$ and denote $\Xi$ the resulting Lipschitz map $\calV \to (\Rn)^m$. Next define $\calV^\perp = \bG(n,n-m) \cap \left\{ W^\perp : W \in \calV \right\}$, apply \ref{grass.param.stief} to $\rmClos \calV^\perp$ and denote $\Xi^\perp$ the resulting Lipschitz map $\calV^\perp \to (\Rn)^{n-m}$. Letting $\bw_i(x) = \left( \Xi_i \circ \wh{\bW}_0 \right)(x)$, $i=1,\ldots,m$, and $\bv_i(x) = \left( \Xi^\perp_i \circ \wh{\bW}_0 \right)(x)$, $i=1,\ldots,n-m$, completes the proof.
\end{proof}

\begin{Empty}
\label{orth.frame.borel}
{\it 
Assume $\bW_0 : \Rn \to \bG(n,m)$ is Borel measurable. There then exist Borel measurable maps $\bw_1,\ldots,\bw_m,\bv_1,\ldots,\bv_{n-m} : \Rn \to \Rn$ such that:
\begin{enumerate}
\item[(1)] For every $x \in \Rn$ the family $\bw_1(x),\ldots,\bw_m(x),\bv_1(x),\ldots,\bv_{n-m}(x)$ is an orthonormal basis of $\Rn$;
\item[(2)] For every $x \in\Rn$ one has
\begin{equation*}
\bW_0(x) = \rmspan \{ \bw_1(x),\ldots,\bw_m(x) \}
\end{equation*}
and
\begin{equation*}
\bW_0(x)^\perp = \rmspan \{ \bv_1(x),\ldots,\bv_{n-m}(x) \} \,.
\end{equation*}
\end{enumerate}
}
\end{Empty}

\begin{proof}
Choose $\Xi : \bG(n,m) \to \bV(n,m)$ and $\Xi^\perp : \bG(n,n-m) \to \bV(n,n-m)$ be as in \ref{grass.param.stief.borel}. Letting $\left(\bw_1(x),\ldots,\bw_m(x)\right) = \left( \Xi \circ \bW_0 \right)(x)$ and $\left(\bv_1(x),\ldots,\bv_{n-m}(x)\right) = \left( \Xi^\perp \circ \bW_0^\perp \right)(x)$, $x \in \Rn$, completes the proof.
\end{proof}

\begin{Empty}[Definition of $\bW(x)$]
\label{W}
The typical situation that arises in the remaining part of this paper is that we are given a set $S \subset \Rn$, a Lipschitz map $\bW_0 : S \to \bG(n,m)$ and $x_0 \in S$. We will represent $\bW_0(x)$ and $\bW_0^\perp(x)$ in a neighborhood $U$ of $x_0$ as in \ref{orth.frame}. We will then further reduce the size of $U$ several times in order that various conditions be met. With no exception we will denote as $\bW(x) = x + \bW_0(x)$ the affine subspace containing $x$, of direction $\bW_0(x)$, whenever $\bW_0(x)$ is defined.
\end{Empty}

\begin{Empty}[Definition of $g_{\bv_1,\ldots,\bv_{n-m},u}$ and lower bound of its coarea factor]
Given an open set $U \subset \Rn$, a Lipschitz map $\bv : U \to \Rn$, and $u \in \Rn$ we define $g_{\bv,u} : U \to \R$ by the formula
\begin{equation*}
g_{\bv,u}(x) = \la \bv(x) , x-u  \ra \,.
\end{equation*} 
Clearly $g_{\bv,u}$ is locally Lipschitz. If $\bv$ is differentiable at $x \in U$ then so is $g_{\bv,u}$ and for every $h \in \Rn$ one has
\begin{equation}
\label{eq.2}
Dg_{\bv,u}(x)(h) = \la \nabla g_{\bv,u}(x),h \ra = \la D\bv(x)(h) , x-u \ra + \la \bv(x) , h \ra \,.
\end{equation}
\par 
Next we assume we are given Lipschitz maps $\bv_1,\ldots,\bv_{n-m} : U \to \Rn$. We define $g_{\bv_1,\ldots,\bv_{n-m},u} : U \to \R^{n-m}$ by the formula
\begin{equation*}
g_{\bv_1,\ldots,\bv_{n-m},u}(x) = \left( g_{\bv_1,u}(x) , \ldots , g_{\bv_{n-m},u}(x) \right) \,.
\end{equation*}
It is Lipschitz as well. The relevance of $g_{\bv_1,\ldots,\bv_{n-m},u}$ stems from the following observation, assuming that $\bv_1,\ldots,\bv_{n-m}$ are associated with $\bW_0$ and $\bW$ as in \ref{orth.frame} and \ref{W}:
\begin{equation}
\label{eq.7}
\begin{split}
u \in \bW(x) & \iff u-x \in \bW_0(x) \\
& \iff \la \bv_i(x) , u-x \ra = 0 \text{ for all } i=1,\ldots,n-m \\
& \iff g_{\bv_1,\ldots,\bv_{n-m},u}(x)=0 \\
& \iff x \in g_{\bv_1,\ldots,\bv_{n-m},u}^{-1}\{0\} \,.
\end{split}
\end{equation}
In fact $|g_{\bv_1,\ldots,\bv_{n-m},u}(x)| = \left| P_{\bW_0(x)^\perp}(x-u)\right|$.
\par 
Abbreviate $g = g_{\bv_1,\ldots,\bv_{n-m},u}$. If each $\bv_i$ is differentiable at $x \in U$, and $h \in \Rn$, then
\begin{equation*}
Dg(x)(h) = \sum_{i=1}^{n-m} Dg_{\bv_i,u}(x)(h)e_i \,,
\end{equation*}
where here and elsewhere $e_1,\ldots,e_{n-m}$ denotes the canonical basis of $\R^{n-m}$.
Thus if $\bv_1(x),\ldots,\bv_{n-m}(x)$ constitute an orthonormal family in $\Rn$ then
\begin{equation*}
Dg_{\bv_i,u}(x)(\bv_j(x)) = \delta_{i,j} + \veps_{i,j}(x,u)
\end{equation*}
where
\begin{equation}
\label{eq.3}
|\veps_{i,j}(x,u)| = \left| \la D\bv_i(x)(\bv_j(x)) , x-u \ra\right| \leq \left( \rmLip \bv_i \right) |x-u| \,,
\end{equation}
according to \eqref{eq.2}, and in turn
\begin{equation*}
Dg(x)(\bv_j(x)) = \sum_{i=1}^{n-m} \left(\delta_{i,j} + \veps_{i,j}(x,u)\right) e_i \,.
\end{equation*}
This allows for a lower bound of the coarea factor of $g$ at $x$ as follows. 
\begin{equation*}
\begin{split}
\left\| \wedge_{n-m}Dg(x) \right\| & \geq \left| Dg(x)(\bv_1(x)) \wedge \ldots \wedge Dg(x)(\bv_{n-m}(x)) \right| \\
& = \left| \left( \sum_{i=1}^{n-m} \left(\delta_{i,1} + \veps_{i,1}(x,u)\right) e_i \right) \wedge \ldots \wedge  \left( \sum_{i=1}^{n-m} \left(\delta_{i,n-m} + \veps_{i,n-m}(x,u)\right) e_i \right) \right| \\
& = \left| \det \left( \delta_{i,j} + \veps_{i,j}(x,u) \right)_{i,j=1,\ldots,n-m} \right| \,.
\end{split}
\end{equation*}
In view of \eqref{eq.3} we obtain the next lemma.
\end{Empty}

\begin{Empty}
\label{jac.g}
{\it 
Given $\Lambda > 0$ and $0 < \veps < 1$ there exists $\bdelta_{\theTheorem}(n,\Lambda,\veps) > 0$ with the following property. Assume that
\begin{enumerate}
\item[(1)] $U \subset \Rn$ is open and $u \in \Rn$;
\item[(2)] $\bv_1,\ldots,\bv_{n-m} : U \to \Rn$ are Lipschitz;
\item[(3)] $\bv_1(x),\ldots,\bv_{n-m}(x)$ is an orthonormal family for every $x \in U$.
\end{enumerate}
If
\begin{enumerate}
\item[(4)] $\rmLip \bv_i \leq \Lambda$ for each $i=1,\ldots,n-m$;
\item[(5)] $\rmdiam \left( U \cup \{u\} \right) \leq \bdelta_{\theTheorem}(n,\Lambda,\veps)$
\end{enumerate}
then
\begin{equation*}
Jg_{\bv_1,\ldots,\bv_{n-m},u}(x) \geq 1 - \veps
\end{equation*}
at $\calL^n$ almost every $x \in U$.
}
\end{Empty}

\begin{Empty}[Definition of $\pi_u$ and its relation with $g_{\bv_1,\ldots,\bv_{n-m},u}$]
\label{pi.u}
With $u \in \Rn$ we associate
\begin{equation*}
\pi_u : \bV(n,n-m) \times \Rn \to \R^{n-m} : (\xi_1,\ldots,\xi_{n-m},x) \mapsto\left( \la \xi_1,x-u\ra, \ldots, \la \xi_{n-m},x-u\ra \right) \,.
\end{equation*}
When $(\xi_1,\ldots,\xi_{n-m}) \in \bV(n,n-m)$ is fixed we also abbreviate as $\pi_{\xi_1,\ldots,\xi_{n-m},u}$ the map $\Rn \to \R^{n-m}$ defined by $\pi_{\xi_1,\ldots,\xi_{n-m},u}(x) = \pi_u(\xi_1,\ldots,\xi_{n-m},x)$. It is then rather useful to observe that in the context described in \ref{orth.frame} and \ref{W} the following holds:
\begin{equation}
\label{eq.4}
\pi_{\bv_1(x),\ldots,\bv_{n-m}(x),u}^{-1}\left\{ g_{\bv_1,\ldots,\bv_{n-m},u}(x) \right\} = \bW(x) \,.
\end{equation}
Indeed,
\begin{equation*}
\begin{split}
h \in \bW(x) & \iff h-x \in \bW_0(x) \\
& \iff \la \bv_i(x) , h-x \ra = 0 \text{ for all }  i=1,\ldots,n-m \\
& \iff \la \bv_i(x) , h-u \ra = \la \bv_i(x) , x-u \ra \text{ for all } i=1,\ldots,n-m \\
& \iff \pi_{\bv_1(x),\ldots,\bv_{n-m}(x),u}(h) = g_{\bv_1,\ldots,\bv_{n-m},u}(x) \,.
\end{split}
\end{equation*}
In the sequel we will sometimes abbreviate $\xi = (\xi_1,\ldots,\xi_{n-m}) \in \bV(n,n-m)$.
It also helps to notice that for given $\xi \in \bV(n,n-m)$ and $y \in \R^{n-m}$ the set $\pi_{\xi,u}^{-1}\{y\}$ is an $m$ dimensional affine subspace of $\Rn$.
\end{Empty}

\begin{Empty}
\label{measurability}
{\it 
Assume $B \in \calB(\Rn)$ and $u \in \Rn$. It follows that
\begin{equation*}
h_{B} :  \bV(n,n-m) \times \R^{n-m} \to \R_+ : (\xi,y) \mapsto \calH^m\left(B  \cap \pi_{\xi,u}^{-1}\{y\} \right) 
\end{equation*}
is Borel measurable.
}
\end{Empty}

\begin{proof}
We start by showing that when $B$ is compact, $h_{B}$ is upper semicontinuous. Thus if $(\xi_k,y_k) \in \bV(n,n-m) \times \R^{n-m}$ converge to $(\xi,y)$, we ought to show that
\begin{equation}
\label{eq.5}
\calH^m_\infty(K) \geq \limsup_k \calH^m_\infty(K_k)
\end{equation}
where $K = B  \cap \pi_{\xi,u}^{-1}\{y\}$ and $K_k = B  \cap \pi_{\xi_k,u}^{-1}\{y_k\}$. This is indeed equivalent to the same inequality with $\calH^m_\infty$ replaced by $\calH^m$ according to \ref{h.m}(3) and the last sentence of \ref{pi.u}. Considering if necessary a subsequence of $(K_k)_k$ we may assume that none of the compact sets $K_k$ is empty, and that the $\limsup$ in \eqref{eq.5} is a $\lim$. Since the set of nonempty compact subsets of the compact set $B$, equipped with the Hausdorff metric is compact, the sequence $(K_k)_k$ admits a subsequence (denoted the same way) converging to a compact set $L \subset B$. Given $z \in L$ there are $z_k \in K_k$ converging to $z$. Thus  $\pi_u(\xi,z) = \lim_k \pi_u(\xi_k,z_k) = \lim_k y_k = y$. In other words $z \in K$. Thus $\calH^m_\infty(K) \geq \calH^m_\infty(L)$ and \eqref{eq.5} follows from \ref{h.m}(1).
\par 
Next we abbreviate $\calA = \calB(\Rn) \cap \{ B : h_{B} \text{ is Borel measurable}\}$. Thus we have just shown that $\calA$ contains the collection $\calK(\Rn)$ of all compact subsets of $\Rn$. Observe that if $(B_j)_j$ is an increasing sequence in $\calA$ and $B = \cup_j B_j$ then $h_{B} = \lim_j h_{B_j}$ pointwise, thus $B \in \calA$. In particular $\Rn \in \calA$. Finally if $B,B' \in \calA$ and $B' \subset B$ then $h_{B \setminus B'} = h_{B} - h_{B'}$ because all measures involved are finite, indeed $h_{B}(\xi,y) \leq \balpha(m)r^m$ for all $(\xi,y)$. Accordingly $B \setminus B' \in \calA$. This means that $\calA$ is a Dynkin class. Since $\calK(\Rn)$ is a $\pi$ system, $\calA$ contains the $\sigma$ algebra generated by $\calK(\Rn)$, i.e. $\calB(\Rn)$, \cite[Theorem 1.6.2]{COHN}.
\end{proof}

\begin{Empty}
\label{second.measurability}
{\it 
Assume $B \in \calB(\Rn)$, $r > 0$ and $\bW_0 : \Rn \to \bG(n,m)$ is Borel measurable. The following function is Borel measurable.
\begin{equation*}
\Rn \to [0,\infty] : x \mapsto \calH^m\left( B \cap \bW(x) \right)\,.
\end{equation*}
}
\end{Empty}

\begin{proof}
Let $h_{\bW,B}$ denote this function. Let $\bv_1,\ldots,\bv_{n-m} : \Rn \to \Rn$ be Borel measurable maps associated with $\bW_0$ as in \ref{orth.frame.borel}. Fix $u \in \Rn$ arbitrarily. Define
\begin{equation*}
\Upsilon : \Rn \to \bV(n,n-m) \times \R^{n-m} : x \mapsto \left(\bv_1(x),\ldots,\bv_{n-m}(x),g_{\bv_1(x),\ldots,\bv_{n-m}(x),u}(x) \right)
\end{equation*}
so that
\begin{equation*}
h_{\bW,B} = h_{B} \circ \Upsilon
\end{equation*}
(where $h_{B}$ is the function associated with $B$ and $u$ in \ref{measurability}), according to \eqref{eq.4}. One notes that $\Upsilon$ is Borel measurable, and the conclusion ensues from \ref{measurability}.
\end{proof}

\begin{Empty}[Definition of $\phi_{E,\bW}$]
\label{def.phi}
Let $\bW_0 : \Rn \to \bG(n,m)$ be Borel measurable and let $E \in \calB(\Rn)$ be such that $\calL^n(E) < \infty$. For each $B \in \calB(\Rn)$ we define
\begin{equation*}
\phi_{E,\bW}(B) = \int_E \calH^m \left( B \cap \bW(x) \right) d\calL^n(x) \,.
\end{equation*}
This is well defined according to \ref{second.measurability}(2). It is easy to check that $\phi_{E,\bW}$ is a locally finite (hence $\sigma$ finite) Borel measure on $\Rn$; indeed $\phi_{E,\bW}(B) \leq \balpha(m) (\rmdiam B)^m \calL^n(E)$. 
\par 
To close this section we discuss the relevance of $\phi_{E,\bW}$ to the problem of existence of <<nearly Nikod\'ym sets>>.
\end{Empty}

\begin{Empty}[Definition of Nearly Nikod\'ym set]
Let $E \in \calB(\Rn)$. We say that $B \in \calB(E)$ is {\bf nearly $m$ Nikod\'ym in $E$} if
\begin{enumerate}
\item[(1)] $\calL^n(B) > 0$;
\item[(2)] For $\calL^n$ almost each $x \in E$ there is $W \in \bG(n,m)$ such that $\calH^m\left( B \cap (x+W) \right) = 0$.
\end{enumerate}
In case $n=2$, $m=1$, $E=[0,1] \times [0,1]$, the existence of such $B$ (with $\calL^2(B)=1$) was established by \textsc{O. Nikod\'ym} \cite{NIK.27}, see also \cite[Chapter 8]{GUZMAN.1981}. For arbitrary $n \geq 2$ and $m=n-1$ the existence of such $B$ was established by \textsc{K. Falconer} \cite{FAL.86}. In fact in both cases these authors proved the stronger condition that for every $x \in B$, $\calH^m(B \cap (x+W))=0$ can be replaced by $B \cap (x+W) = \{x\}$. Thus in case $1 \leq m< n-1$, if $B$ is a set exhibited by \textsc{K. Falconer}, $x \in B$ and $W \subset \in \bG(n,n-1)$ is such that $B \cap (x+W) = \{x\}$, picking arbitrarily $V \in \bG(n,m)$ such that $V \subset W$ we see that $B \cap (x+V)=\{x\}$. Whence $B$ is also nearly $m$ Nikod\'ym in $B$. 
\par 
Assuming also that $\bW_0 : E \to \bG(n,m)$ is Borel measurable we say that $B \in \calB(E)$ is {\bf nearly $m$ Nikod\'ym in $E$ relative to $\bW$} if
\begin{enumerate}
\item[(1)] $\calL^n(B) > 0$;
\item[(2)] For $\calL^n$ almost each $x \in E$ one has $\calH^m\left( B \cap \bW(x) \right) = 0$.
\end{enumerate}
\end{Empty}

\begin{Empty}
{\it 
Let $E \in \calB(\Rn)$ and let $\bW_0 : \Rn \to \bG(n,m)$ be Borel measurable. The following are equivalent.
\begin{enumerate}
\item[(1)] $\calL^n |_{\calB(E)}$ is absolutely continuous with respect to $\phi_{E,\bW}|_{\calB(E)}$.
\item[(2)] There does not exist a nearly $m$ Nikod\'ym set relative to $\bW$.
\end{enumerate}
}
\end{Empty}

\begin{proof}
A set $B \in \calB(E)$ such that $\phi_{E,\bW}(B)=0$ and $\calL^n(B) > 0$ is, by definition a nearly $m$ Nikod\'ym set relative to $\bW$. Condition (1) is equivalent to their nonexistence.
\end{proof}

\begin{Empty}
\label{nik.set}
{\it 
Assume that $E \in \calB(\Rn)$ and that $B \in \calB(E)$ is nearly $m$ Nikod\'ym. It follows that:
\begin{enumerate}
\item[(1)] There exists $\bW_0 : \Rn \to \bG(n,m)$ Borel measurable such that $B$ is nearly $m$ Nikod\'ym in $E$ relative to $\bW$.
\item[(2)] There exists $C \subset B$ compact and $\bW_0 : \Rn \to \bG(n,m)$ continuous such that $C$ is nearly $m$ Nikod\'ym in $C$ relative to $\bW$.
\end{enumerate}
}
\end{Empty}

\begin{proof}
Define a Borel measurable map $\bxi : \bG(n,m) \to \bV(n-m)$ by $\bxi(W) = \Xi \left( W^\perp \right)$ where $\Xi : \bG(n,n-m) \to \bV(n,n-m)$ is as in \ref{grass.param.stief.borel}. Choose arbitrarily $u \in \Rn$ and define a Borel measurable map
\begin{multline*}
\Upsilon :  E \times \bG(n,m) \to  \bV(n,n-m) \times \R^{n-m} \\ (x,W) \mapsto \left(\bxi(W) , \la \bxi_1(W),x-u  \ra , \ldots, \la \bxi_{n-m}(W),x-u \ra \right) \,.
\end{multline*} 
Similarly to \eqref{eq.4} observe that
\begin{equation*}
W = \pi_{\bxi(W),u}^{-1} \left\{ \left( \la \bxi_1(W),x-u  \ra , \ldots, \la \bxi_{n-m}(W),x-u \ra \right) \right\}
\end{equation*}
for every $W \in \bG(n,m)$. We infer from \ref{measurability} that
\begin{equation*}
 h_{B} \circ \Upsilon :E \times \bG(n,m) \to [0,\infty]:(x,W) \mapsto \calH^m \left( B  \cap (x+W) \right)
\end{equation*}
is Borel measurable. Thus the set
\begin{equation*}
\calE = E \times \bG(n,m) \cap \left\{ (x,W) : \calH^m \left( B \cap (x+W) \right) = 0 \right\}
\end{equation*}
is Borel as well. The set $N = E \cap \{ x : \calE_x = \emptyset \}$ is coanalytic and $\calL^n(N)=0$ by assumption. According to von Neumann's selection Theorem \cite[5.5.3]{SRIVASTAVA} there exists a universally measurable map $\tilde{\bW}_0 : E \setminus N \to \bG(n,m)$ such that $\tilde{\bW}_0(x) \in \calE_x$ for every $x \in E \setminus N$, i.e. $\calH^m \left( B \cap \left( x + \tilde{\bW}_0(x) \right) \right) = 0$. We extend $\tilde{\bW}_0$ to be an arbitrary constant on $N$. This makes $\tilde{\bW}_0$ an $\calL^n$ measurable map defined on $E$. Therefore it is equal $\calL^n$ almost everywhere to a Borel map $\bW_0 : E \to \bG(n,m)$. This proves (1).
\par 
In order to prove (2) we recall of \ref{grassmann}, specifically the retraction $\rho : V \to M_{n,m}$ and the homeomorphic identification $\vphi : \bG(n,m) \to M_{n,m}$. Owing to the compactness of $M_{n,m}$ there are finitely many open balls $U_j$, $j=1,\ldots,J$, whose closure are contained in $V$ and covering $M_{n,m}$. Since $\calL^n(B) > 0$ there exists $j=1,\ldots,J$ such that $\calL^n \left( B \cap E_j \right) > 0$ where $E_j = \left( \vphi \circ \bW_0 \right)^{-1}(U_j)$. It follows from Lusin's Theorem \cite[2.5.3]{GMT} that there exists a compact set $C \subset B \cap E_j$ such that $\calL^n(C) > 0$ and the restriction $\bW_0|_C$ is continuous. The map $\vphi \circ \bW_0|_C$ takes its values in the closed ball $\rmClos U_j$, therefore admits a continuous extension $\bY : \Rn \to \rmClos U_j \subset V$. Letting $\bW = \vphi^{-1} \circ \rho \circ \bY$ completes the proof.
\end{proof}

\section{Common setting}

\begin{Empty}[Setting for the next three sections]
\label{31}
In the next three sections we shall assume the following.
\begin{enumerate}
\item $E \subset \Rn$ is Borel and $\calL^n(E) < \infty$.
\item $U \subset \Rn$ is open and $E \subset U$.
\item $B \subset \Rn$ is Borel.
\item $\bW_0 : E \to \bG(n,m)$ is Lipschitz.
\item $\bW(x) = x + \bW_0(x)$ for each $x \in E$.
\item $\Lambda > 0$.
\item $\bw_1,\ldots,\bw_m : U \to \Rn$ and $\rmLip \bw_i \leq \Lambda$, $i=1,\ldots,m$.
\item $\bv_1,\ldots,\bv_{n-m} : U \to \Rn$ and $\rmLip \bv_i \leq \Lambda$, $i=1,\ldots,n-m$.
\item $\bW_0(x) = \rmspan \{ \bw_1(x),\ldots,\bw_m(x)\}$ for every $x \in E$.
\item $\bW_0(x)^\perp = \rmspan \{ \bv_1(x),\ldots,\bv_{n-m}(x)\}$ for every $x \in E$.
\item $\bw_1(x),\ldots,\bw_m(x),\bv_1(x),\ldots,\bv_{n-m}(x)$ constitute an orthonormal basis of $\Rn$, for every $x \in E$.
\end{enumerate}
\end{Empty}

\section{Two fibrations}

\begin{Empty}[A fibered space associated with $E,B,\bw_1,\ldots,\bw_m$]
\label{fibration.1}
We define 
\begin{equation*}
F : E \times \Rm \to \Rn \times \Rn : (x,t_1,\ldots,t_m) \mapsto \left( x , x + \sum_{i=1}^m t_i \bw_i(x) \right)
\end{equation*}
as well as 
\begin{equation*}
\Sigma = F( E \times \Rm) = \Rn \times \Rn \cap \left\{ (x,u) : x \in E \text{ and } u \in \bW(x) \right\}\,.
\end{equation*}
 It is obvious that $F$ is Lipschitz and therefore $\Sigma$ is countably $n+m$ rectifiable and $\calH^{n+m}$ measurable. We also consider the two canonical projections
\begin{equation*}
\pi_1 : \Rn \times \Rn \to \Rn : (x,u) \mapsto x \quad\text{ and }\quad \pi_2 : \Rn \times \Rn \to \Rn : (x,u) \mapsto u
\end{equation*}
as well as 
\begin{equation*}
\Sigma_B = \Sigma \cap \pi_2^{-1}(B) = \Rn \times \Rn \cap \left\{ (x,u) : x \in E \text{ and } u \in B \cap \bW(x) \right\}\,,
\end{equation*}
which is clearly also countably $n+m$ rectifiable and $\calH^{n+m}$ measurable. In view of applying the coarea formula to $\Sigma_B$ and $\pi_1$ first, to $\Sigma_B$ and $\pi_2$ next, we observe that
\begin{equation*}
\Sigma_B \cap \pi_1^{-1}\{x\} = \Rn \times \Rn \cap \{ (x,u) : u \in B \cap \bW(x) \}
\end{equation*}
so that
\begin{equation}
\label{eq.6}
\calH^m \left(\Sigma_B \cap \pi_1^{-1}\{x\} \right) = \calH^m \left( B \cap \bW(x)\right)
\end{equation}
whenever $x \in E$, and that
\begin{equation*}
\begin{split}
\Sigma_B \cap \pi_2^{-1}\{u\} & = \Rn \times \Rn \cap \{ (x,u) : x \in E \text{ and } u \in \bW(x) \} \\
& = \Rn \times \Rn \cap \left\{ (x,u) : x \in E \cap g_{\bv_1,\ldots,\bv_{n-m},u}^{-1}\{0\} \right\}
\end{split}
\end{equation*}
according to \eqref{eq.7}, so that
\begin{equation}
\label{eq.8}
\calH^m \left( \Sigma_B \cap \pi_2^{-1}\{u\}\right) = \calH^m \left( E \cap g_{\bv_1,\ldots,\bv_{n-m},u}^{-1}\{0\}\right) 
\end{equation}
whenever $u \in B$. It now follows from the coarea formula that
\begin{equation}
\label{eq.9}
\int_{\Sigma_B} J_\Sigma \pi_1 d\calH^{n+m} = \int_E \calH^m \left( B \cap \bW(x) \right) d\calL^n(x) = \phi_{E,\bW}(B)
\end{equation}
and
\begin{equation}
\label{eq.10}
\int_{\Sigma_B} J_\Sigma \pi_2 d\calH^{n+m} = \int_B \calH^m \left( E \cap g_{\bv_1,\ldots,\bv_{n-m},u}^{-1}\{0\}\right) d\calL^n(u) \,.
\end{equation}
For these formul\ae{} to be useful we need to establish bounds for the coarea Jacobian factors $J_\Sigma \pi_1$ and $J_\Sigma \pi_2$. In order to do so we notice that if $\Sigma \ni (x,u) = F(x,t_1,\ldots,t_m)$ and if $F$ is differentiable at $(x,t_1,\ldots,t_m)$ then the approximate tangent space $T_{(x,u)}\Sigma$ exists and is generated by the following $n+m$ vectors of $\Rn \times \Rn$:
\begin{equation*}
\begin{split}
\frac{\partial F}{\partial x_j}(x,t) & = \left( e_j, e_j + \sum_{i=1}^m t_i \frac{\partial \bw_i}{\partial x_j}(x) \right) \,, \,j=1,\ldots,n \\
\frac{\partial F}{\partial t_k}(x,t) & = \left( 0 , \bw_k(x) \right) \,, \,k=1,\ldots,m \,.
\end{split}
\end{equation*}
As usual $e_1,\ldots,e_n$ denotes the canonical basis of $\Rn$.
\end{Empty}

\begin{Empty}[Coarea Jacobian factor of $\pi_1$] 
\label{factor.pi.1}
{\it 
For $\calH^{n+m}$ almost every $(x,u) \in \Sigma$ one has
\begin{equation*}
\left( 2 + 2m\Lambda |x-u| + m^2\Lambda^2 |x-u|^2 \right)^{-\frac{n}{2}} \leq J_\Sigma \pi_1(x,u) \leq 1 \,.
\end{equation*}
}
\end{Empty}

\begin{proof}
We recall \ref{coarea}.
The right hand inequality follows from $\rmLip \pi_1 = 1$. Regarding the left hand inequality fix $(x,u) = F(x,t)$ such that $F$ is differentiable at $(x,t)$ and let $L : T_{(x,u)}\Sigma \to \R^n$ denote the restriction of $\pi_1$ to $T_{(x,u)}\Sigma$. Put $v_j = \frac{\partial F}{\partial x_j}(x,t)$, $j=1,\ldots,n$, and recall \eqref{eq.12} that
\begin{equation*}
J_\Sigma \pi_1(x,u) = \| \wedge_n L \| \geq \frac{1}{|v_1 \wedge \ldots \wedge v_n|}
\end{equation*}
since $L(v_j)=e_j$, $j=1\ldots,n$. Now notice that 
\begin{multline*}
\left| \frac{\partial F}{\partial x_j}(x,t)\right|^2 = |e_j|^2 + \left| e_j + \sum_{i=1}^m t_i \frac{\partial \bw_i}{\partial x_j}(x)\right|^2 
\leq 2 + 2 \left|\sum_{i=1}^m t_i \frac{\partial \bw_i}{\partial x_j}(x) \right| + \left|\sum_{i=1}^m t_i \frac{\partial \bw_i}{\partial x_j}(x) \right|^2 \\
\leq 2 + 2 m \Lambda |t| + m^2\Lambda^2|t|^2 \,.
\end{multline*}
Since $u = x + \sum_{i=1}^m t_i \bw_i(x)$ one also has
\begin{equation*}
|u-x|^2 = \left| \sum_{i=1}^m t_i \bw_i(x) \right|^2 = |t|^2 \,.
\end{equation*}
Finally,
\begin{multline*}
|v_1 \wedge \ldots \wedge v_n | = \left| \frac{\partial F}{\partial x_1}(x,t) \wedge \ldots \wedge \frac{\partial F}{\partial x_1}(x,t) \right| \leq \prod_{j=1}^n \left|  \frac{\partial F}{\partial x_j}(x,t) \right| \\
 \leq \left( 2 + 2 m \Lambda |x-u| + m^2\Lambda^2|x-u|^2 \right)^\frac{n}{2}
\end{multline*}
and the conclusion follows.
\end{proof}

\begin{Empty}
\label{34}
{\it 
Let $1 \leq q \leq n-1$ be an integer and let $v_1,\ldots,v_q$ be an orthonormal family in $\Rn$. There then exists $\lambda \in \Lambda(n,q)$ such that 
\begin{equation*}
\left| \det \left( \la v_k , e_{\lambda(j)} \ra \right)_{j,k=1,\ldots,q}\right| \geq \bin{n}{q}^{-\frac{1}{2}} \,.
\end{equation*}
Here $\Lambda(n,q)$ denotes the set of increasing maps $\{1,\ldots,q\} \to \{1,\ldots,n\}$.
}
\end{Empty}

\begin{proof}
We define a linear map $L : \R^q \to \Rn : (s_1,\ldots,s_q) \mapsto \sum_{k=1}^q s_k v_k$ and we observe that $L$ is an isometry. Therefore its area Jacobian factor $JL=1$, by definition. Now also
\begin{equation*}
(JL)^2 = \sum_{\lambda \in \Lambda(n,q)}\left| \det \left( \la v_k , e_{\lambda(j)} \ra \right)_{j,k=1,\ldots,q}\right|^2
\end{equation*}
according to the Binet-Cauchy formula \cite[Chapter 3 \S 2 Theorem 4]{EVANS.GARIEPY}. The conclusion easily follows.
\end{proof}

\begin{Empty}[Coarea Jacobian factor of $\pi_2$] 
\label{factor.pi.2}
{\it 
The following hold.
\begin{enumerate}
\item 
For $\calH^{n+m}$ almost every $(x,u) \in \Sigma$ one has
\begin{equation*}
\frac{\bin{n}{n-m}^{-\frac{1}{2}} - \left(2^{n-m}-1 \right) m \Lambda |u-x|}{\left( 2 + 2 m \Lambda |x-u| + m^2\Lambda^2|x-u|^2 \right)^\frac{n-m}{2}} \leq J_\Sigma \pi_2(x,u) \leq 1 \,.
\end{equation*}
\item 
For $\calH^{n+m}$ almost every $(x,u) \in \Sigma$ one has $J_\Sigma \pi_2(x,u) > 0$.
\end{enumerate}
}
\end{Empty}

\begin{proof}
Clearly $J_\Sigma \pi_2(x,u) \leq (\rmLip \pi_2)^n \leq 1$. Regarding the left hand inequality fix $(x,u) = F(x,t)$ such that $F$ is approximately differentiable at $(x,t)$ and this time let $L : T_{(x,u)}\Sigma \to \R^n$ denote the restriction of $\pi_2$ to $T_{(x,u)}\Sigma$. We will now define a family of $n$ vectors $v_1,\ldots,v_n$ belonging to $T_{(x,u)}\Sigma$. We choose $v_k = \frac{\partial F}{\partial t_k}(x,t) = (0,\bw_k(x))$ for $k=1,\ldots,m$. For choosing the $n-m$ remaining vectors we proceed as follows. We select $\lambda \in \Lambda(n,n-m)$ as in \ref{34} applied with $q=n-m$ to $\bv_1(x),\ldots,\bv_{n-m}(x)$, and we let $v_{m+j} = \frac{\partial F}{\partial x_{\lambda(j)}}(x,t)$, $j=1,\ldots,n-m$. Recalling \eqref{eq.12} we have
\begin{equation*}
J_\Sigma \pi_1(x,u) = \| \wedge_n L \| \geq \frac{|L(v_1) \wedge \ldots \wedge L(v_n)|}{|v_1 \wedge \ldots \wedge v_n|} \,.
\end{equation*}
As in the proof of \ref{factor.pi.1} we find that
\begin{equation*}
|v_1 \wedge \ldots \wedge v_n| \leq \left( 2 + 2 m \Lambda |x-u| + m^2\Lambda^2|x-u|^2 \right)^\frac{n-m}{2}
\end{equation*}
and it remains only to find a lower bound for $|L(v_1) \wedge \ldots \wedge L(v_n)|$. This equals the absolute value of the determinant of the matrix of coefficients of $L(v_i)$, $i=1,\ldots,n$, with respect to any orthonormal basis of $\Rn$. We choose the basis $\bw_1(x),\ldots,\bw_m(x),\bv_1(x),\ldots,\bv_{n-m}(x)$. Thus
\begin{equation}
\label{eq.13}
\begin{split}
|L(v_1) \wedge \ldots \wedge L(v_n)| & = \left| \det \left(
\begin{array}{c c c | c}
1 & \cdots & 0 & * \\
\vdots & \ddots & \vdots & \vdots \\
0 & \cdots & 1 & * \\
\hline
0 & \cdots & 0 & \left\la e_{\lambda(j)} + \sum_{i=1}^m t_i \frac{\partial \bw_i}{\partial x_{\lambda(j)}}(x) , \bv_k(x) \right\ra \\
\end{array}
\right)\right| \\
& = \left| \det \left( \left\la e_{\lambda(j)} + \sum_{i=1}^m t_i \frac{\partial \bw_i}{\partial x_{\lambda(j)}}(x) , \bv_k(x) \right\ra\right)_{j,k=1,\ldots,n-m}\right| \,.
\end{split}
\end{equation}
Abbreviate
\begin{equation*}
h_{\lambda(j)} = \sum_{i=1}^m t_i \frac{\partial \bw_i}{\partial x_{\lambda(j)}}(x)
\end{equation*}
and observe that $\left| h_{\lambda(j)} \right| \leq m \Lambda |t| = m \Lambda |x-u|$, $j=1,\ldots,n-m$ (recall the proof of \ref{factor.pi.1}). It remains only to remember that $\lambda$ has been selected in order that
\begin{equation*}
\left| \det \left( \left\la e_{\lambda(j)}, \bv_k(x)\right\ra\right)_{j,k=1,\ldots,n-m} \right| \geq \bin{n}{n-m}^{-\frac{1}{2}}
\end{equation*}
and to infer from the multilinearity of the determinant that
\begin{multline*}
\left| \det \left( \left\la e_{\lambda(j)} , \bv_k(x)\right\ra + \left\la h_{\lambda(j)} , \bv_k(x)\right\ra\right) - \det \left( \left\la e_{\lambda(j)} , \bv_k(x)\right\ra\right)\right| \\ \leq \left(2^{n-m}-1 \right) \left(\max_{j=1,\ldots,n-m} \left|\left\la h_{\lambda(j)},\bv_k(x) \right\ra \right| \right)\left( \max_{j,k=1,\ldots,m} \left| \left\la e_{\lambda(j)} , \bv_k(x) \right\ra\right|\right)^{n-m-1}\\
\leq \left(2^{n-m}-1 \right)m\Lambda |x-u| \,.
\end{multline*}
This completes the proof of conclusion (1).
\par 
Let $E_0$ denote the subset of $E$ consisting of those $x$ such that each $\bw_i$, $i=1,\ldots,m$, is differentiable at $x$. Thus $E_0$ is Borel and so is
\begin{multline*}
A = E_0 \times \Rm \cap \big\{ (x,t) : \\ \rmrank 
\left(
\begin{array}{c|c|c|c|c|c}
\bw_1(x) & \cdots & \bw_m(x) & e_1 + \sum_{i=1}^m t_i \frac{\partial \bw_i}{\partial x_1}(x) & \ldots & e_n + \sum_{i=1}^m t_i \frac{\partial \bw_i}{\partial x_n}(x) 
\end{array}
\right) < n
\big\}
\end{multline*}
If $(x,u) \in \Sigma \setminus F(A)$ then the restriction of $\pi_2$ to $T_{(x,u)}\Sigma$ is surjective and therefore $J_\Sigma \pi_2(x,u) > 0$. Thus we ought to show that $\calH^{n+m}(F(A))=0$. Since $F$ is Lipschitz it suffices to establish that $\calL^{n+m}(A)=0$. As $A$ is Borel it is enough to prove that $\calL^m(A_x)=0$ for every $x \in E_0$, according to Fubini's Theorem. Fix $x \in E_0$. As in the proof of conclusion (1), choose $\lambda \in \Lambda(n,n-m)$ associated with $\bv_1(x),\ldots,\bv_{n-m}(x)$ according to \ref{34}. Based on \eqref{eq.13} we see that
\begin{equation*}
A_x \subset \Rm \cap \left\{ t :  \det \left( \left\la e_{\lambda(j)} + \sum_{i=1}^m t_i \frac{\partial \bw_i}{\partial x_{\lambda(j)}}(x) , \bv_k(x) \right\ra\right)_{j,k=1,\ldots,n-m}=0\right\}
\end{equation*}
The set on the right is of the form $S_x = \Rm \cap \{ (t_1,\ldots,t_m ) : P_x(t_1,\ldots,t_m) = 0 \}$ for some polynomial $P_x \in \R[T_1,\ldots,T_m]$, and $P_x(0,\ldots,0) = \det\left( \la e_{\lambda(j)},\bv_k(x) \ra\right)_{j,k=1,\ldots,n-m} \neq 0$. It follows that $\calL^m(S_x)=0$, see e.g. \cite[2.6.5]{GMT} and the proof of (2) is complete.
\end{proof}

\begin{Proposition}
\label{AC.1}
The measure $\phi_{E,\bW}$ is absolutely continuous with respect to $\calL^n$.
\end{Proposition}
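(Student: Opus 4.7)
My plan is to read off absolute continuity directly from the two coarea identities \eqref{eq.9} and \eqref{eq.10}, using \ref{factor.pi.2}(2) as the key positivity input.

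Concretely, suppose $B \in \calB(\Rn)$ with $\calL^n(B)=0$. I want to show $\phi_{E,\bW}(B)=0$. First, I would apply \eqref{eq.10}: since the right-hand side is an integral over $B$ of a nonnegative Borel function with respect to $\calL^n$, and $\calL^n(B)=0$, it vanishes. Hence
\begin{equation*}
\int_{\Sigma_B} J_\Sigma\pi_2 \, d\calH^{n+m} = 0.
\end{equation*}

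Second, I invoke \ref{factor.pi.2}(2) to conclude that $J_\Sigma\pi_2 > 0$ at $\calH^{n+m}$ almost every point of $\Sigma$, and in particular at $\calH^{n+m}$ almost every point of the $\calH^{n+m}$ measurable subset $\Sigma_B \subset \Sigma$. Since the integral of a $\calH^{n+m}$-a.e.\ strictly positive function over $\Sigma_B$ vanishes, the set $\Sigma_B$ must itself be $\calH^{n+m}$ negligible, i.e.\ $\calH^{n+m}(\Sigma_B)=0$.

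Finally, I feed this back into \eqref{eq.9}. By \ref{factor.pi.1} we have $J_\Sigma\pi_1 \leq 1$ everywhere it is defined, so
\begin{equation*}
\phi_{E,\bW}(B) \;=\; \int_{\Sigma_B} J_\Sigma \pi_1 \, d\calH^{n+m} \;\leq\; \calH^{n+m}(\Sigma_B) \;=\; 0,
\end{equation*}
which yields the absolute continuity. There is no real obstacle here: all the serious work has already been done in setting up the fibration $\Sigma$, deriving \eqref{eq.9}--\eqref{eq.10} via the rectifiable coarea formula, and proving $\calH^{n+m}$-a.e.\ positivity of the Jacobian factor $J_\Sigma\pi_2$ in \ref{factor.pi.2}(2); the present proposition is essentially a one-line corollary combining these three ingredients with the hypothesis $\calL^n(B)=0$.
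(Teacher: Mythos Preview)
Your proof is correct and follows exactly the same route as the paper's own argument: use \eqref{eq.10} and $\calL^n(B)=0$ to get $\int_{\Sigma_B} J_\Sigma\pi_2\,d\calH^{n+m}=0$, invoke \ref{factor.pi.2}(2) to conclude $\calH^{n+m}(\Sigma_B)=0$, and then read off $\phi_{E,\bW}(B)=0$ from \eqref{eq.9}. The only cosmetic difference is that you cite the bound $J_\Sigma\pi_1\leq 1$ from \ref{factor.pi.1} in the last step, whereas the paper simply uses that any integral over an $\calH^{n+m}$ null set vanishes.
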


\begin{proof}
Let $B \in \calB(\Rn)$ be such that $\calL^n(B)=0$. It follows from \eqref{eq.10} that
\begin{equation*}
\int_{\Sigma_B} J_\Sigma \pi_2 d\calH^{n+m} = 0 \,.
\end{equation*}
It next follows from \ref{factor.pi.2}(2) that $\calH^{n+m}(\Sigma_B)=0$. In turn \eqref{eq.9} implies that
\begin{equation*}
\phi_{E,\bW}(B) = \int_{\Sigma_B} J_\Sigma \pi_1 d\calH^{n+m} = 0 \,.
\end{equation*}
\end{proof}

\begin{Empty}[Definition of $\calZ_E \bW$]
\label{def.Z}
Note that $\phi_{E,\bW}$ is a $\sigma$ finite Borel measure on $\Rn$ (see \ref{def.phi}) and that it is absolutely continuous with respect to $\calL^n$ (see \ref{AC.1}). It then ensues from the Radon-Nikod\'ym Theorem that there exists a Borel measurable function
\begin{equation*}
\calZ_E \bW : E \to \R
\end{equation*}
such that for every $B \in \calB(\Rn)$ one has
\begin{equation*}
\int_E \calH^m \left( B \cap \bW(x) \right) d\calL^n(x) = \phi_{E,\bW}(B) = \int_B \calZ_E \bW(u) d\calL^n(u) \,.
\end{equation*}
Furthermore $\calZ_E \bW$ is univoquely defined only up to a $\calL^n$ null set. This will not affect the reasonings in this paper. Each time we will write $\calZ_E \bW$ we will mean {\it one} particular Borel measurable function verifying the above equality for every $B \in \calB(\Rn)$.
\end{Empty}

\begin{Empty}[Definition of $\calY_E^0 \bW$]
\label{def.Y0}
We define $\calY^0_E\bW : \Rn \to [0,\infty]$ by the formula
\begin{equation}
\label{eq.14}
\calY_E^0 \bW(u) = \calH^m \left( E \cap g_{\bv_1,\ldots,\bv_{n-m},u}^{-1}\{0\}\right) \,
\end{equation}
$u \in \Rn$. Letting $B = \Rn$ in \eqref{eq.8} one infers from \ref{coarea} that $\calY^0_E\bW$ is $\calL^n$ measurable. Using the estimates we have established so far regarding coarea Jacobian factors we now show that $\calZ_E \bW$ and $\calY_E^0 \bW$ are comparable when the diameter of $E$ is not too large.
\end{Empty}

\begin{Proposition}
\label{Z.1}
Given $0 < \veps < 1$ there exists $\bdelta_{\theTheorem}(n,\Lambda,\veps) > 0$ with the following property. If $\rmdiam E \leq \bdelta_{\theTheorem}(n,\Lambda,\veps)$ then
\begin{equation*}
(1-\veps) 2^{-\frac{n}{2}} \calY^0_E \bW(u) \leq \calZ_E \bW(u) \leq (1+\veps) 2^\frac{n-m}{2} \bin{n}{n-m}^\frac{1}{2	}\calY^0_E \bW(u)
\end{equation*}
for $\calL^n$ almost every $u \in E$.
\end{Proposition}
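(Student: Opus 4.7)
The plan is to apply the two coarea identities \eqref{eq.9} and \eqref{eq.10} to Borel sets $B \subset E$, then conclude via Radon--Nikodým differentiation. The key observation is that for such $B$, the product set $\Sigma_B$ lives entirely inside $E \times E$ (since $u \in B \subset E$ and $x \in E$), so $|x-u| \leq \rmdiam E$ throughout $\Sigma_B$, which lets us plug $\rmdiam E$ into the estimates of \ref{factor.pi.1} and \ref{factor.pi.2}(1).

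First I would fix a small $\eta \in (0,1)$, to be tuned at the end, and choose $\bdelta_{\theTheorem}(n,\Lambda,\veps)$ so that whenever $|x-u| \leq \bdelta_{\theTheorem}(n,\Lambda,\veps)$ the two simultaneous inequalities
\begin{equation*}
\left(2 + 2m\Lambda|x-u| + m^2\Lambda^2|x-u|^2\right)^{-\frac{n}{2}} \geq (1-\eta) 2^{-\frac{n}{2}}
\end{equation*}
and
\begin{equation*}
\frac{\bin{n}{n-m}^{-\frac{1}{2}} - \left(2^{n-m}-1\right) m\Lambda |x-u|}{\left(2 + 2m\Lambda|x-u| + m^2\Lambda^2|x-u|^2\right)^{\frac{n-m}{2}}} \geq (1-\eta)\bin{n}{n-m}^{-\frac{1}{2}} 2^{-\frac{n-m}{2}}
\end{equation*}
both hold. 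Assuming $\rmdiam E$ does not exceed this $\bdelta_{\theTheorem}$, \ref{factor.pi.1} and \ref{factor.pi.2}(1) give, $\calH^{n+m}$ almost everywhere on $\Sigma_B$, the bounds
\begin{equation*}
(1-\eta) 2^{-\frac{n}{2}} \leq J_\Sigma \pi_1 \leq 1 \quad\text{and}\quad (1-\eta) \bin{n}{n-m}^{-\frac{1}{2}} 2^{-\frac{n-m}{2}} \leq J_\Sigma \pi_2 \leq 1 \,.
\end{equation*}

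Second I combine these with \eqref{eq.9} and \eqref{eq.10}. For every Borel $B \subset E$ one obtains
\begin{equation*}
\int_B \calZ_E \bW \, d\calL^n = \int_{\Sigma_B} J_\Sigma \pi_1 \, d\calH^{n+m} \geq (1-\eta) 2^{-\frac{n}{2}}\calH^{n+m}(\Sigma_B) \geq (1-\eta) 2^{-\frac{n}{2}} \int_B \calY^0_E \bW \, d\calL^n
\end{equation*}
(using $J_\Sigma \pi_2 \leq 1$ in the last step) and symmetrically
\begin{equation*}
\int_B \calZ_E \bW \, d\calL^n \leq \calH^{n+m}(\Sigma_B) \leq \frac{\bin{n}{n-m}^{\frac{1}{2}} 2^{\frac{n-m}{2}}}{1-\eta} \int_B \calY^0_E \bW \, d\calL^n \,.
\end{equation*}

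Third, since $B \subset E$ is an arbitrary Borel set, the Lebesgue differentiation theorem (or the uniqueness clause of Radon--Nikodým) forces the corresponding pointwise inequalities between $\calZ_E \bW(u)$ and $\calY^0_E \bW(u)$ at $\calL^n$ almost every $u \in E$. Finally I pick $\eta$ small enough that $1-\eta \geq 1-\veps$ and $1/(1-\eta) \leq 1+\veps$, and set $\bdelta_{\theTheorem}(n,\Lambda,\veps)$ to the corresponding threshold. There is no serious obstacle here, the only care needed is the geometric remark that $B \subset E$ ensures $|x-u| \leq \rmdiam E$ on $\Sigma_B$, so that the Jacobian estimates can be made uniform and quantitative in $\rmdiam E$.
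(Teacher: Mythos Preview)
Your proposal is correct and follows essentially the same route as the paper: restrict to Borel $B \subset E$ so that $|x-u|\leq \rmdiam E$ on $\Sigma_B$, feed this into the Jacobian bounds of \ref{factor.pi.1} and \ref{factor.pi.2}(1), sandwich $\phi_{E,\bW}(B)$ between multiples of $\int_B \calY^0_E\bW\,d\calL^n$ via \eqref{eq.9} and \eqref{eq.10}, and pass to a pointwise inequality by Radon--Nikod\'ym. The only cosmetic difference is that the paper plugs $\veps$ directly into the Jacobian thresholds, whereas you route through an auxiliary $\eta$; the substance is identical.
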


\begin{proof}
We readily infer from \ref{factor.pi.1} and \ref{factor.pi.2}(1) that there exists $\bdelta(n,\Lambda,\veps) > 0$ such that for $\calH^{n+m}$ almost all $(x,u) \in \Sigma$ if $|x-u| \leq \bdelta(n,\Lambda,\veps)$ then 
\begin{equation}
\alpha := (1-\veps) 2^{-\frac{n}{2}} \leq J_\Sigma \pi_1(x,u)
\end{equation}
and
\begin{equation}
\beta := (1+\veps)^{-1} 2^{-\frac{n-m}{2}} \bin{n}{n-m}^{-\frac{1}{2}} \leq J_\Sigma \pi_2(x,u) 
\end{equation}
where the above define $\alpha$ and $\beta$.
\par 
Assume now that $\rmdiam E \leq \bdelta(n,\Lambda,\veps)$. Given $B \in \calB(E)$ we infer from \eqref{eq.9}, \ref{factor.pi.1}, \ref{factor.pi.2}(1), \eqref{eq.10} and the above lower bounds that
\begin{multline*}
\phi_{E,\bW}(B)  = \int_{\Sigma_B} J_\Sigma \pi_1 d\calH^{n+m} 
 \geq \alpha \calH^{n+m}(\Sigma_B) 
 \geq \alpha \int_{\Sigma_B} J_\Sigma \pi_2 d\calH^{n+m} \\
 = \alpha \int_B \calY^0_E \bW d\calL^n
\end{multline*}
and
\begin{multline*}
\phi_{E,\bW}(B) = \int_{\Sigma_B} J_\Sigma \pi_1 d\calH^{n+m} \leq \calH^{n+m}(\Sigma_B) \leq \beta^{-1} \int_{\Sigma_B} J_\Sigma \pi_2 d\calH^{n+m} \\
 = \beta^{-1} \int_B  \calY^0_E \bW d\calL^n \,.
\end{multline*}
Thus
\begin{equation*}
\int_B \beta^{-1} \calY^0_E \bW d\calL^n \leq \int_B \calZ_E \bW d\calL^n \leq \int_B \alpha \calY^0_E \bW d\calL^n
\end{equation*}
for every $B \in \calB(\Rn)$. The conclusion follows from the $\calL^n$ measurability of both $\calZ_E \bW$ and $\calY^0_E \bW$.
\end{proof}

\begin{Empty}[Rest stop]
The above upper bound for $\calZ_E \bW$ is already enough to bound it in turn, by a constant times $(\rmdiam E)^m$, see \ref{cor.ub}. However I would not know how to use the above lower bound to establish that $\calZ_E \bW > 0$ almost everywhere in $E$, which is what we are after. Indeed in the definition \eqref{eq.14} of $\calY^0_E \bW(u)$, $u$ does not appear as the covariable of the function whose level set we are measuring, thereby preventing the use of the coarea formula in an attempt to estimate $\calY^0_E \bW(u)$. This naturally leads to adding a variable $y \in \R^{n-m}$ to the fibered space $\Sigma$, a covariable for $g_{\bv_1,\ldots,\bv_{n-m},u}$.
\end{Empty}

\begin{Empty}[A fibered space associated with $E, B, \bw_1,\ldots,\bw_m,\bv_1,\ldots,\bv_{n-m}$]
\label{fibration.2}
Let $r > 0$, and abbreviate $C_r = \R^{n-m} \cap \left\{ y : |y| \leq r \right\}$ the Euclidean ball centered at the origin, of radius $r$ in $\R^{n-m}$. We define
\begin{equation*}
\hat{F}_r : E \times \Rm \times C_r \to \Rn \times \Rn \times \R^{n-m} : (x,t,y) \mapsto \left(x , x + \sum_{i=1}^m t_i \bw_i(x) + \sum_{i=1}^{n-m} y_i \bv_i(x) , y \right)
\end{equation*}
and
\begin{equation*}
\hat{\Sigma}_r = \hat{F}_r \left( E \times \Rm \times C_r \right) = \Rn \times \Rn \times C_r \cap \left\{ (x,u,y) : x \in E \text{ and } u \in \bW(x) + \sum_{i=1}^{n-m} y_i\bv_i(x)\right\}
\end{equation*}
so that $\hat{F}_r$ is Lipschitz and $\hat{\Sigma}_r$ is countably $2n$ rectifiable and $\calH^{2n}$ measurable. Similarly to \ref{fibration.1} we define
\begin{equation*}
\hat{\Sigma}_{r,B} = \hat{\Sigma}_r \cap \pi_2^{-1}(B) 
\end{equation*}
which clearly is also countably $2n$ rectifiable and $\calH^{2n}$ measurable. We aim to apply the coarea formula to $\hat{\Sigma}_{r,B}$ and to the two projections
\begin{equation*}
\pi_1 \times \pi_3 : \Rn \times \Rn \times \R^{n-m} \to \Rn \times \R^{n-m} : (x,u,y) \mapsto (x,y)
\end{equation*}
and
\begin{equation*}
\pi_2 \times \pi_3 : \Rn \times \Rn \times \R^{n-m} \to \Rn \times \R^{n-m} : (x,u,y) \mapsto (u,y) \,.
\end{equation*}
To this end we notice that
\begin{multline*}
\hat{\Sigma}_{r,B} \cap \left( \pi_1 \times \pi_3 \right)^{-1} \{ (x,y) \} \\ = \Rn \times \Rn \times \R^{n-m} \cap \left\{ (x,u,y) : u \in B \cap \left( \bW(x) + \sum_{i=1}^{n-m} y_i \bv_i(x) \right)\right\}
\end{multline*}
and thus
\begin{equation*}
\calH^m \left( \hat{\Sigma}_{r,B} \cap \left( \pi_1 \times \pi_3 \right)^{-1} \{ (x,y) \}\right) = \calH^m \left(B \cap \left( \bW(x) + \sum_{i=1}^{n-m} y_i \bv_i(x) \right) \right)
\end{equation*}
for every $(x,y) \in E \times C_r$. We further notice that
\begin{equation*}
\begin{split}
\hat{\Sigma}_{r,B} \cap ( \pi_2 & \times \pi_3 )^{-1} \{ (u,y) \} \\ &= \Rn \times \Rn \times \R^{n-m} \cap \left\{ (x,u,y) : x \in E \text{ and } u \in \bW(x) + \sum_{i=1}^{n-m} y_i\bv_i(x)\right\}\\
&= \Rn \times \Rn \times \R^{n-m} \cap \left\{ (x,u,y) : x \in E \cap g_{\bv_1,\ldots,\bv_{n-m},u}^{-1}\{y\} \right\} \,,
\end{split}
\end{equation*}
because
\begin{equation*}
\begin{split}
u \in \bW(x) + \sum_{i=1}^{n-m} y_i \bv_i(x) & \iff u-x- \sum_{i=1}^{n-m} y_i \bv_i(x) \in \bW_0(x) \\
& \iff \left\la \bv_j(x) , u-x- \sum_{i=1}^{n-m} y_i \bv_i(x) \right\ra = 0 \text{ for all } j=1,\ldots,n-m \\
& \iff g_{\bv_1,\ldots,\bv_{n-m},u}(x) = y
\end{split}
\end{equation*}
and therefore
\begin{equation*}
\calH^m \left(\hat{\Sigma}_{r,B} \cap ( \pi_2 \times \pi_3 )^{-1} \{ (u,y) \} \right) = \calH^m \left(E \cap g_{\bv_1,\ldots,\bv_{n-m},u}^{-1}\{y\} \right)
\end{equation*}
whenever $u \in B$ and $y \in C_r$.
\par 
It now follows from the coarea formula and Fubini's Theorem that
\begin{equation}
\label{eq.20}
\int_{\hat{\Sigma}_{r,B}} J_{\hat{\Sigma}_r} (\pi_1 \times \pi_3) d\calH^{2n} = \int_E  d\calL^n(x)\int_{C_r} \calH^m \left(B \cap \left( \bW(x) + \sum_{i=1}^{n-m} y_i \bv_i(x) \right) \right) d\calL^{n-m}(y) 
\end{equation}
and that
\begin{equation}
\label{eq.21}
\int_{\hat{\Sigma}_{r,B}} J_{\hat{\Sigma}_r} (\pi_2 \times \pi_3) d\calH^{2n} = \int_B d\calL^n(u) \int_{C_r} \calH^m \left( E \cap g_{\bv_1,\ldots,\bv_{n-m},u}^{-1}\{y\} \right)d\calL^{n-m}(y) \,.
\end{equation}
\end{Empty}

\begin{Empty}[Coarea Jacobian factors of $\pi_1 \times \pi_3$ and $\pi_2 \times \pi_3$]
\label{factor}
{\it 
The following inequalities hold for $\calH^{2n}$ almost every $(x,u,y) \in \hat{\Sigma}_r$.
\begin{equation*}
2^{-\frac{n-m}{2}} \left( 2 + 4n\Lambda |u-x| + 3n^2 \Lambda^2 |u-x|^2 \right)^{-\frac{n}{2}} \leq J_{\hat{\Sigma}_r} (\pi_1 \times \pi_3)(x,u,y)
\end{equation*}
and 
\begin{equation*}
 J_{\hat{\Sigma}_r} (\pi_2 \times \pi_3)(x,u,y) \leq 1 \,.
\end{equation*}
}
\end{Empty}

\begin{proof}
The second conclusion is obvious since $\rmLip \pi_2 \times \pi_3 = 1$. Regarding the first conclusion we reason similarly as in the proof of \ref{factor.pi.1}. Fix $(x,u,y) = \hat{F}_r(x,t,y)$ such that $\hat{F}_r$ is differentiable at $(x,t,y)$ and denote by $L$ the restriction of $\pi_1 \times \pi_3$ to $T_{(x,u,y)}\hat{\Sigma}_r$. This tangent space is generated by the following $2n$ vectors of $\Rn \times \Rn \times \R^{n-m}$
\begin{equation*}
\begin{split}
\frac{\partial \hat{F}_r}{\partial x_j}(x,t,y) & = \left( e_j , e_j + \sum_{i=1}^m t_i \frac{\partial \bw_i(x)}{\partial x_j}(x) + \sum_{i=1}^{n-m} y_i \frac{\partial \bv_i}{\partial x_j}(x) , 0 \right) \,,\, j=1,\ldots,n \\
\frac{\partial \hat{F}_r}{\partial t_k}(x,t,y) & = \left( 0, \bw_k(x), 0 \right) \,,\, k=1,\ldots,m \\
\frac{\partial \hat{F}_r}{\partial y_\ell}(x,t,y) & = \left( 0, \bv_\ell(x) , e_\ell \right) \,,\, \ell=1,\ldots,n-m \,.
\end{split}
\end{equation*}
The range of $\pi_1 \times \pi_3$ being $2n -m$ dimensional we need to select $2n-m$ vectors $v_1,\ldots,v_{2n-m}$ in $ T_{(x,u,y)}\hat{\Sigma}_r$ to obtain a lower bound
\begin{equation}
\label{eq.22}
J_{\hat{\Sigma}_r} (\pi_1 \times \pi_3)(x,u,y) = \| \wedge_{2n-m} L \| \geq \frac{|L(v_1) \wedge \ldots \wedge L(v_{2n-m}|}{|v_1 \wedge \ldots \wedge v_{2n-m}|} \,.
\end{equation}
The obvious choice consists of $v_j = \frac{\partial \hat{F}_r}{\partial x_j}(x,t,y)$, $j=1,\ldots,n$, and $v_{n+\ell} = \frac{\partial \hat{F}_r}{\partial y_\ell}(x,t,y)$, $\ell=1,\ldots,n-m$, so that $L(v_1),\ldots,L(v_{n-m})$ is the canonical basis of $\Rn \times \R^{n-m}$ and therefore the numerator in \eqref{eq.22} equals 1. In order to determine an upper bound for its denominator we start by fixing $j=1,\ldots,n$, we abbreviate $a_j(x,t,y) = \sum_{i=1}^m t_i \frac{\partial \bw_i(x)}{\partial x_j}(x)$ and $b_j(x,t,y) = \sum_{i=1}^{n-m} y_i \frac{\partial \bv_i}{\partial x_j}(x)$ and we notice that $|a_j(x,t,y)| \leq m \Lambda |t| \leq n \Lambda |t|$, $|b_j(x,t,y)| \leq (n-m) \Lambda |y| \leq n \Lambda |y|$. Furthermore since $u - x = \sum_{i=1}^m t_i \bw_i(x) + \sum_{i=1}^{n-m} y_i \bv_i(x)$ one has $|u-x|^2 = |t|^2 + |y|^2 \leq \max \{|t|^2,|y|^2\}$. Therefore
\begin{equation*}
\begin{split}
\left| \frac{\partial \hat{F}_r}{\partial x_j}(x,t,y)\right|^2 &= |e_j|^2 + |e_j + a_j(x,t,y) + b_j(x,t,y)|^2 \\
& \leq 1 + 1 + |a_j(x,t,y)|^2 + |b_j(x,t,y)|^2 \\
& \quad \quad \quad+ 2|a_j(x,t,y)| + 2 |b_j(x,t,y)| + 2 |a_j(x,t,y)||b_j(x,t,y)| \\
& \leq 2 + 4n\Lambda |u-x| + 3n^2 \Lambda^2 |u-x|^2 \,.
\end{split}
\end{equation*}
Moreover 
\begin{equation*}
\left| \frac{\partial \hat{F}_r}{\partial y_\ell}(x,t,y) \right| = \sqrt{2}
\end{equation*}
for each $\ell=1,\ldots,n-m$. We conclude that
\begin{multline*}
|v_1\wedge \ldots \wedge v_{2n-m} | \leq \left( \prod_{j=1}^n \left| \frac{\partial \hat{F}_r}{\partial x_j}(x,t,y)\right|\right) \left(\prod_{\ell=1}^{n-m} \left| \frac{\partial \hat{F}_r}{\partial y_\ell}(x,t,y) \right| \right) \\
\leq 2^\frac{n-m}{2} \left( 2 + 4n\Lambda |u-x| + 3n^2 \Lambda^2 |u-x|^2 \right)^\frac{n}{2}
\end{multline*}
and the proof is complete.
\end{proof}

\begin{Empty}[Definition of $\calY_E \bW$]
\label{def.Y}
It follows from the Coarea Theorem that the function
\begin{equation*}
\Rn \times \R^{n-m} \to [0,\infty] : (u,y) \to \calH^m \left( E \cap g_{\bv_1,\ldots,\bv_{n-m},u}^{-1} \{y\}\right) 
\end{equation*}
is $\calL^n \otimes \calL^{n-m}$ measurable (recall \ref{fibration.2} applied with $B = \Rn$). It now follows from Fubini's Theorem that for each $r > 0$ the function
\begin{equation*}
\Rn \to [0,\infty] : u \mapsto \dashint_{C_r} \calH^m \left( E \cap g_{\bv_1,\ldots,\bv_{n-m},u}^{-1} \{y\}\right) d\calL^{n-m}(y)
\end{equation*}
is $\calL^n$ measurable. In turn the function
\begin{equation*}
\calY_E \bW : \Rn \to [0,\infty] : u \mapsto \liminf_j \dashint_{C_{j^{-1}}} \calH^m \left( E \cap g_{\bv_1,\ldots,\bv_{n-m},u}^{-1} \{y\}\right) d\calL^{n-m}(y)
\end{equation*}
is $\calL^n$ measurable. It is a replacement for $\calY^0_E \bW$ defined in \ref{def.Y0}. We shall establish for $\calZ_E \bW$ a similar lower bound to that in \ref{Z.1}, this time involving $\calY_E \bW$. Before doing so, we notice the rather trivial fact that if $F \subset E$ then
\begin{equation*}
\calY_F \bW(u) \leq \calY_E \bW(u)
\end{equation*}
for all $u \in \Rn$.
\end{Empty}

\begin{Empty}[Preparatory remark for the proof of \ref{lb.1}]
\label{rem.1}
It follows from the Coarea Theorem that the function
\begin{equation*}
\Rn \times \R^{n-m} \to [0,\infty] : (x,y) \mapsto \calH^m \left(B \cap \left( \bW(x) + \sum_{i=1}^{n-m} y_i \bv_i(x) \right) \right) 
\end{equation*}
is $\calL^n \otimes \calL^{n-m}$ measurable (recall \ref{fibration.2} applied with $B = \Rn$). It therefore follows from Fubini's Theorem as in \ref{def.Y} that
\begin{equation*}
f_j : \Rn \to [0,\infty] : x \mapsto \limsup_j \dashint_{C_{j^{-1}}} \calH^m \left(B \cap \left( \bW(x) + \sum_{i=1}^{n-m} y_i \bv_i(x) \right) \right) d\calL^{n-m}(y)
\end{equation*}
is $\calL^n$ measurable. Furthermore if $B$ is bounded then $|f_j(x)| \leq \balpha(m) (\rmdiam B)^m$ for every $x \in \Rn$. 
\end{Empty}

\begin{Empty}
\label{usc}
{\it 
If $B$ is compact then for every $x \in E$ the function
\begin{equation*}
\R^{n-m} \to \R_+ : y \mapsto \calH^m \left(B \cap \left( \bW(x) + \sum_{i=1}^{n-m} y_i \bv_i(x) \right) \right) 
\end{equation*}
is upper semicontinuous.
}
\end{Empty}

\begin{proof}
The proof is analogous to that of \ref{measurability}.
For each $y \in \R^{n-m}$ define the compact set $K_y = B \cap \left( \bW(x) + \sum_{i=1}^{n-m} y_i \bv_i(x) \right)$. If $(y_k)_k$ is a sequence converging to $y$ we ought to show that
\begin{equation*}
\calH^m_\infty \left(K_y\right) \geq \limsup_k \calH^m_\infty \left( K_{y_k} \right) \,.
\end{equation*}
Since each $K_y$ is a subset of an $m$ dimensional affine subspace of $\Rn$ this is indeed equivalent to the same inequality with $\calH^m_\infty$ replaced by $\calH^m$ according to \ref{h.m}(3). Considering if necessary a subsequence of $(y_k)_k$ we may assume that none of the compact sets $K_{y_k}$ is empty and the the above $\limsup$ is a $\lim$. Considering yet a further subsequence we may now assume that $(K_{y_k})_k$ converges in Hausdorff distance to some compact set $L \subset B$. One checks that $L \subset K_y$. It then follows from \ref{h.m}(1) that $\calH^m_\infty \left(K_y\right) \geq \calH^m_\infty (L) \geq \limsup_k \calH^m_\infty \left( K_{y_k} \right)$.
\end{proof}

\begin{Proposition}
\label{lb.1}
Given $0 < \veps < 1$ there exists $\bdelta_{\theTheorem}(n,\Lambda,\veps) > 0$ with the following property.
If $\rmdiam ( E \cup B) \leq \bdelta_{\theTheorem}(n,\Lambda,\veps)$ and if $B$ is compact then
\begin{equation*}
\int_E \calH^m \left( B \cap \bW(x) \right) d\calL^n(x) \geq (1-\veps) 2^{-\frac{2n-m}{2}} \int_B \calY_E \bW(u) d\calL^n(u) \,.
\end{equation*}
\end{Proposition}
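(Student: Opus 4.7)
The plan is to chain the two coarea formulas \eqref{eq.20} and \eqref{eq.21} for the fibered space $\hat\Sigma_{r,B}$ using the Jacobian bounds of \ref{factor}, then pass to the limit as $r = j^{-1} \to 0$. First I would choose $\bdelta = \bdelta_{\theTheorem}(n,\Lambda,\veps) > 0$ so small that, whenever $|u-x| \leq \bdelta$, the first inequality of \ref{factor} yields
\begin{equation*}
J_{\hat\Sigma_r}(\pi_1 \times \pi_3)(x,u,y) \geq (1-\veps) 2^{-\frac{2n-m}{2}}\,.
\end{equation*}
This is possible because $2^{-\frac{n-m}{2}}\bigl(2 + 4n\Lambda |u-x| + 3n^2\Lambda^2|u-x|^2\bigr)^{-n/2} \to 2^{-\frac{n-m}{2}}\cdot 2^{-n/2} = 2^{-\frac{2n-m}{2}}$ as $|u-x| \to 0$. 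Since every point of $\hat\Sigma_{r,B}$ has first coordinate in $E$ and second coordinate in $B$, assuming $\rmdiam(E \cup B) \leq \bdelta$ ensures $|u-x| \leq \bdelta$ throughout $\hat\Sigma_{r,B}$.

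Combining this lower bound with $J_{\hat\Sigma_r}(\pi_2\times \pi_3) \leq 1$ from \ref{factor} gives
\begin{equation*}
\int_{\hat\Sigma_{r,B}} J_{\hat\Sigma_r}(\pi_1\times \pi_3)\, d\calH^{2n} \geq (1-\veps) 2^{-\frac{2n-m}{2}} \int_{\hat\Sigma_{r,B}} J_{\hat\Sigma_r}(\pi_2\times \pi_3)\, d\calH^{2n}\,.
\end{equation*}
Applying \eqref{eq.20} to the left side and \eqref{eq.21} to the right side, and then dividing through by $\calL^{n-m}(C_r) = \balpha(n-m) r^{n-m}$, I obtain, for every $r \in (0,\bdelta]$,
\begin{equation*}
\int_E f_r(x)\, d\calL^n(x) \geq (1-\veps) 2^{-\frac{2n-m}{2}} \int_B h_r(u)\, d\calL^n(u)\,,
\end{equation*}
where $f_r(x) = \dashint_{C_r} \calH^m\bigl(B \cap (\bW(x) + \sum y_i \bv_i(x))\bigr)\, d\calL^{n-m}(y)$ and $h_r(u) = \dashint_{C_r} \calH^m\bigl(E \cap g_{\bv_1,\ldots,\bv_{n-m},u}^{-1}\{y\}\bigr)\, d\calL^{n-m}(y)$.

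Next I would specialize to $r = j^{-1}$ and let $j \to \infty$. On the right, Fatou's lemma together with the definition of $\calY_E\bW$ in \ref{def.Y} gives
\begin{equation*}
\liminf_j \int_B h_{j^{-1}}\, d\calL^n \geq \int_B \calY_E\bW\, d\calL^n\,.
\end{equation*}
On the left, the compactness of $B$ and \ref{usc} imply that for each $x \in E$ the integrand defining $f_r(x)$ is upper semicontinuous in $y$; hence $\limsup_j f_{j^{-1}}(x) \leq \calH^m(B \cap \bW(x))$ (bounding the average by the supremum over $C_{j^{-1}}$ and letting $j \to \infty$). Since moreover $f_{j^{-1}}(x) \leq \balpha(m)(\rmdiam B)^m$ and $\calL^n(E) < \infty$, the reverse Fatou lemma yields
\begin{equation*}
\limsup_j \int_E f_{j^{-1}}\, d\calL^n \leq \int_E \calH^m(B \cap \bW(x))\, d\calL^n(x)\,.
\end{equation*}
Chaining these three estimates—the inequality for each fixed $j$, reverse Fatou on the left, and ordinary Fatou on the right—yields the conclusion. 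The main subtlety is precisely this interplay of Fatou in opposite directions, which requires compactness of $B$ (for upper semicontinuity) on one side and nonnegativity on the other.
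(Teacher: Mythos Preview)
Your proposal is correct and follows essentially the same route as the paper: choose $\bdelta$ via \ref{factor} so that $J_{\hat\Sigma_r}(\pi_1\times\pi_3)\geq(1-\veps)2^{-(2n-m)/2}$ on $\hat\Sigma_{r,B}$, chain \eqref{eq.20} and \eqref{eq.21}, then pass to the limit along $r=j^{-1}$ using upper semicontinuity (\ref{usc}) with reverse Fatou on the $E$-integral and ordinary Fatou on the $B$-integral. The paper's proof is organized in the same way, with the same domination $f_{j^{-1}}\leq \balpha(m)(\rmdiam B)^m\ind_E$ justifying reverse Fatou.
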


\begin{proof}
We first observe that we can choose $\bdelta_{\ref{lb.1}}(n,\Lambda,\veps) > 0$ small enough so that 
\begin{equation}
\label{eq.23}
J_{\hat{\Sigma}_r} (\pi_1 \times \pi_3)(x,u,y) \geq (1-\veps) 2^{-\frac{2n-m}{2}}
\end{equation}
for $\calH^{2n}$ almost every $(x,u,y) \in \hat{\Sigma}_r$ provided $|u-x| \leq \bdelta_{\ref{lb.1}}(n,\Lambda,\veps)$, according to \ref{factor}. Thus \eqref{eq.23} holds for $\calH^{2n}$ almost every $(x,u,y) \in \hat{\Sigma}_{r,B}$ under the assumption that  $\rmdiam ( E \cup B) \leq \bdelta_{\ref{lb.1}}(n,\Lambda,\veps)$. When \eqref{eq.20}, \eqref{eq.21} and \ref{factor} imply that
\begin{equation}
\label{eq.25}
\begin{split}
\int_E  d\calL^n(x) & \int_{C_r} \calH^m \left(B \cap \left( \bW(x) + \sum_{i=1}^{n-m} y_i \bv_i(x) \right) \right) d\calL^{n-m}(y) \\
&= \int_{\hat{\Sigma}_{r,B}} J_{\hat{\Sigma}_r} (\pi_1 \times \pi_3) d\calH^{2n} \\
& \geq (1-\veps) 2^{-\frac{2n-m}{2}} \calH^{2n} \left( \hat{\Sigma}_{r,B}\right) \\
& \geq (1-\veps) 2^{-\frac{2n-m}{2}}\int_{\hat{\Sigma}_{r,B}} J_{\hat{\Sigma}_r} (\pi_2 \times \pi_3) d\calH^{2n} \\
& = (1-\veps) 2^{-\frac{2n-m}{2}} \int_B d\calL^n(u) \int_{C_r} \calH^m \left( E \cap g_{\bv_1,\ldots,\bv_{n-m},u}^{-1}\{y\} \right)d\calL^{n-m}(y) \,.
\end{split}
\end{equation}
\par 
Fix $x \in E$ and $\beta > 0$. According to \ref{usc} there exists a positive integer $j(x,\beta)$ such that if $j \geq j(x,\beta)$ then
\begin{equation*}
\begin{split}
\calH^m \left( B \cap \bW(x) \right) + \beta & \geq \sup_{y \in C_{j^{-1}}}
\calH^m \left(B \cap \left( \bW(x) + \sum_{i=1}^{n-m} y_i \bv_i(x) \right) \right) \\
& \geq \dashint_{C_{j^{-1}}}\calH^m \left(B \cap \left( \bW(x) + \sum_{i=1}^{n-m} y_i \bv_i(x) \right) \right) d\calL^{n-m}(y) \,.
\end{split}
\end{equation*} 
Taking the $\limsup$ as $j \to \infty$ on the right hand side, and letting $\beta \to 0$ we obtain
\begin{equation}
\label{eq.24}
\calH^m \left( B \cap \bW(x) \right) \geq \limsup_j \dashint_{C_{j^{-1}}}\calH^m \left(B \cap \left( \bW(x) + \sum_{i=1}^{n-m} y_i \bv_i(x) \right) \right) d\calL^{n-m}(y) \,.
\end{equation}
As this holds for all $x \in E$ we may integrate over $E$ with respect to $\calL^n$. Noticing that for every $j=1,2,\ldots$ (with the notation of \ref{rem.1}) $|f_j| \leq \balpha(m) (\rmdiam B)^m \ind_E$, the latter being $\calL^n$ summable, justifies the application of the reverse Fatou lemma below. Thus the following ensues from \eqref{eq.24}, the reverse Fatou lemma, \eqref{eq.25}, and the Fatou lemma:
\begin{equation*}
\begin{split}
\int_E \calH^m &\left( B \cap \bW(x) \right) d\calL^n(x) \\& \geq \int_E d\calL^n(x) \limsup_j \dashint_{C_{j^{-1}}}\calH^m \left(B \cap \left( \bW(x) + \sum_{i=1}^{n-m} y_i \bv_i(x) \right) \right) d\calL^{n-m}(y) \\
& \geq \limsup_j \int_E d\calL^n(x)\dashint_{C_{j^{-1}}}\calH^m \left(B \cap \left( \bW(x) + \sum_{i=1}^{n-m} y_i \bv_i(x) \right) \right) d\calL^{n-m}(y) \\
& \geq (1-\veps) 2^{-\frac{2n-m}{2}} \limsup_j \int_B d\calL^n(u) \dashint_{C_{j^{-1}}} \calH^m \left( E \cap g_{\bv_1,\ldots,\bv_{n-m},u}^{-1}\{y\} \right)d\calL^{n-m}(y)\\
& \geq (1-\veps) 2^{-\frac{2n-m}{2}} \liminf_j \int_B d\calL^n(u) \dashint_{C_{j^{-1}}} \calH^m \left( E \cap g_{\bv_1,\ldots,\bv_{n-m},u}^{-1}\{y\} \right)d\calL^{n-m}(y)\\
& \geq (1-\veps) 2^{-\frac{2n-m}{2}}  \int_B d\calL^n(u)\liminf_j \dashint_{C_{j^{-1}}} \calH^m \left( E \cap g_{\bv_1,\ldots,\bv_{n-m},u}^{-1}\{y\} \right)d\calL^{n-m}(y)\\
&= (1-\veps) 2^{-\frac{2n-m}{2}}\int_B \calY_E \bW(u)d\calL^n(u) \,.
\end{split}
\end{equation*}
\end{proof}

\begin{Corollary}
\label{lb.2}
If $0 < \veps < 1$ and $\rmdiam E \leq \bdelta_{\ref{lb.1}}(n,\Lambda,\veps)$ then 
\begin{equation*}
\calZ_E \bW(u) \geq (1-\veps) 2^{-\frac{2n-m}{2}}\calY_E \bW(u)
\end{equation*}
for $\calL^n$ almost every $u \in E$.
\end{Corollary}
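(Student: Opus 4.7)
The plan is to deduce the pointwise inequality from its integrated version by combining Proposition~\ref{lb.1} with the defining property of $\calZ_E\bW$ in~\ref{def.Z}, and then extending from compact test sets to arbitrary Borel test sets by inner regularity.

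First, assume $\rmdiam E \leq \bdelta_{\ref{lb.1}}(n,\Lambda,\veps)$ and let $K \subset E$ be compact. Then $\rmdiam(E \cup K) = \rmdiam E \leq \bdelta_{\ref{lb.1}}(n,\Lambda,\veps)$, so Proposition~\ref{lb.1} applies with $B = K$ and yields, together with~\ref{def.Z},
\begin{equation*}
\int_K \calZ_E \bW \, d\calL^n = \int_E \calH^m(K \cap \bW(x)) \, d\calL^n(x) \geq (1-\veps) 2^{-\frac{2n-m}{2}} \int_K \calY_E \bW \, d\calL^n .
\end{equation*}
Set $c = (1-\veps) 2^{-(2n-m)/2}$ for brevity. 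Since $\phi_{E,\bW}(K) \leq \balpha(m)(\rmdiam K)^m \calL^n(E) < \infty$ (see~\ref{def.phi}), both integrals above are finite, and in particular $\calY_E \bW \in L^1(K,\calL^n)$ for every compact $K \subset E$.

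Next I would extend the integral inequality to arbitrary Borel sets $B \in \calB(E)$. By inner regularity of $\calL^n$ there exists an increasing sequence of compact sets $K_j \subset B$ with $\calL^n(B \setminus \cup_j K_j) = 0$. Applying the previous step to each $K_j$ and letting $j \to \infty$, the monotone convergence theorem (applied to the nonnegative functions $\calZ_E\bW$ and $\calY_E\bW$) gives
\begin{equation*}
\int_B \calZ_E \bW \, d\calL^n \geq c \int_B \calY_E \bW \, d\calL^n
\end{equation*}
for every $B \in \calB(E)$.

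Finally I would pass from this to the pointwise inequality $\calL^n$-a.e. on $E$. The obstacle here is only notational: one must handle separately the region where $\calY_E\bW$ may be infinite. Decompose the putative bad set
\begin{equation*}
N = E \cap \{ u : \calZ_E \bW(u) < c\, \calY_E \bW(u) \}
\end{equation*}
as $N = N_\infty \cup \bigcup_{k,\ell \in \bbN} N_{k,\ell}$, where $N_\infty = N \cap \{\calY_E \bW = \infty\}$ and $N_{k,\ell} = N \cap \{\calZ_E\bW + 1/k \leq c\,\calY_E\bW \leq \ell\}$. On each $N_{k,\ell}$, both sides of the integrated inequality are bounded, and if $\calL^n(N_{k,\ell}) > 0$ one obtains the contradiction $\int_{N_{k,\ell}} \calZ_E\bW \geq \int_{N_{k,\ell}} c\, \calY_E\bW \geq \int_{N_{k,\ell}} \calZ_E\bW + \calL^n(N_{k,\ell})/k$. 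For $N_\infty$, if $\calL^n(N_\infty) > 0$, picking a compact subset of $N_\infty$ of positive measure (using inner regularity again) and applying the integrated inequality there yields the contradiction $\infty > \int_K \calZ_E\bW \geq c \int_K \calY_E\bW = \infty$, since $\calY_E\bW \in L^1$ on compacta forces $\calY_E\bW < \infty$ a.e.\ anyway, so in fact $\calL^n(N_\infty) = 0$ automatically. Thus $\calL^n(N) = 0$, which is the desired conclusion.
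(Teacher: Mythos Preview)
Your proof is correct and is precisely the expected deduction from Proposition~\ref{lb.1}, mirroring the pattern of the explicit proof of Proposition~\ref{Z.1}; the paper states the corollary without proof. One cosmetic point: since $\calY_E\bW$ is only $\calL^n$ measurable (see~\ref{def.Y}), your sets $N_{k,\ell}$ need not be Borel, but your inner-regularity argument in the second step works verbatim for $\calL^n$ measurable $B\subset E$, so nothing changes.
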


\section{Upper bound for $\calY_E \bW$ and $\calZ_E \bW$}

\begin{Empty}[Bow Tie Lemma]
\label{bow.tie}
{\it 
Let $S \subset \Rn$, $W \in \bG(n,m)$ and $0 < \tau < 1$. Assume that 
\begin{equation*}
(\forall \,x \in S)(\forall \,0 < \rho \leq \rmdiam S) : S \cap \bB(x,\rho) \subset \bB(x+W,\tau\rho) \,.
\end{equation*}
There then exists $F : P_W(S) \to \Rn$ such that $S=\rmim F$ and $\rmLip F \leq \frac{1}{\sqrt{1-\tau^2}}$. In particular
\begin{equation*}
\calH^m(S) \leq \left(\frac{1}{\sqrt{1-\tau^2}} \right)^m \balpha(m) (\rmdiam S)^m \,.
\end{equation*}
}
\end{Empty}

\begin{proof}
Let $x,x' \in S$ and define $\rho = |x-x'| \leq \rmdiam S$. Thus $x' \in S \cap \bB(x,\rho)$ and therefore $\left|P_{W^\perp}(x-x') \right| \leq \tau \rho = \tau |x-x'|$. Since $|x-x'|^2 = \left|P_{W}(x-x') \right|^2 + \left|P_{W^\perp}(x-x') \right|^2$ we infer that
\begin{equation*}
(1-\tau^2) \left|x-x'\right|^2 \leq \left|P_{W}(x-x') \right|^2
\end{equation*}
Therefore $P_W|_S$ is injective, and the Lipschitz bound on $F = \left( P_W|_S \right)^{-1}$ clearly follows from the above inequality. Regarding the second conclusion,
\begin{equation*}
\calH^m(S) = \calH^m\left(F(P_W(S)) \right) \leq \left( \rmLip F\right)^m \calH^m \left(P_W(S) \right),
\end{equation*}
and $P_W(S)$ is contained in a ball of radius $\rmdiam P_W(S) \leq \rmdiam S$.
\end{proof}

\begin{Empty}
\label{ub.1}
{\it 
Given $0 < \tau < 1$ there exists $\bdelta_{\theTheorem}(n,\Lambda,\tau) > 0$ with the following property. If 
\begin{enumerate}
\item $x_0 \in U$ and $u \in \Rn$;
\item $\rmdiam\left( E \cup \{x_0\}\cup\{u\}\right) \leq \bdelta_{\theTheorem}(n,\Lambda,\tau)$;
\end{enumerate}
Then: For every $y \in \R^{n-m}$, for every $x \in E \cap g_{\bv_1,\ldots,\bv_{n-m},u}^{-1}\{y\}$ and for every $0 < \rho < \infty$ one has
\begin{equation*}
E \cap g_{\bv_1,\ldots,\bv_{n-m},u}^{-1}\{y\} \cap \bB(x,\rho) \subset \bB \left( x + \bW_0(x_0),\tau \rho\right)\,.
\end{equation*}
}
\end{Empty}

\begin{proof}
We shall show that $\bdelta_{\ref{ub.1}}(n,\Lambda,\tau)=\frac{\tau}{2\Lambda \sqrt{n}}$ will do. Let $x,x' \in E \cap g_{\bv_1,\ldots,\bv_{n-m},u}^{-1}\{y\}$ for some $y \in \R^{n-m}$. Thus $g_{\bv_1,\ldots,\bv_{n-m},u}(x)=g_{\bv_1,\ldots,\bv_{n-m},u}(x')$ and hence
\begin{multline*}
0 = \left| g_{\bv_1,\ldots,\bv_{n-m},u}(x) - g_{\bv_1,\ldots,\bv_{n-m},u}(x')\right| = \sqrt{\sum_{i=1}^{n-m} \left| \la \bv_i(x),x-u\ra -  \la \bv_i(x'),x'-u\ra\right|^2} \\
= \sqrt{\sum_{i=1}^{n-m} \left| \la \bv_i(x),x-x'\ra - \la \bv_i(x') - \bv_i(x),x'-u \ra\right|^2}\\
\geq \sqrt{\sum_{i=1}^{n-m} \left| \la \bv_i(x),x-x'\ra\right|^2} - \sqrt{\sum_{i=1}^{n-m}\left| \la \bv_i(x') - \bv_i(x),x'-u \ra\right|^2}\,,
\end{multline*}
thus
\begin{multline*}
\sqrt{\sum_{i=1}^{n-m} \left| \la \bv_i(x),x-x'\ra\right|^2} \leq \sqrt{\sum_{i=1}^{n-m}\left| \la \bv_i(x') - \bv_i(x),x'-u \ra\right|^2} \\
\leq \sqrt{n-m} \Lambda |x-x'||x'-u| \leq \frac{\tau}{2}|x-x'|\,.
\end{multline*}
In turn,
\begin{multline*}
\left| P_{\bW_0(x_0)^\perp}(x-x')\right| = \sqrt{\sum_{i=1}^{n-m}\left|\la \bv_i(x_0),x-x' \ra \right|^2} \\ \leq \sqrt{\sum_{i=1}^{n-m}\left|\la \bv_i(x'),x-x' \ra \right|^2}+\sqrt{\sum_{i=1}^{n-m}\left|\la \bv_i(x') - \bv_i(x_0),x-x' \ra \right|^2}\\
\leq \frac{\tau}{2}|x-x'| + \sqrt{n-m}\Lambda |x'-x_0||x-x'| \leq \tau |x-x'|\,.
\end{multline*}
\end{proof}

\begin{Proposition}
\label{upper.bound}
There are $\bdelta_{\theTheorem}(n,\Lambda) > 0$ and $\bc_{\theTheorem}(m) \geq 1$ with the following property. If $u \in U$ and $\rmdiam (E \cup \{u\}) \leq \bdelta_{\theTheorem}(n,\Lambda)$ then
\begin{equation*}
\max \left\{ \calY^0_E \bW(u) , \calY_E \bW(u)\right\} \leq \bc_{\theTheorem}(m) (\rmdiam E)^m \,.
\end{equation*}
\end{Proposition}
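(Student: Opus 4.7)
The approach is to combine Lemma \ref{ub.1} with the Bow Tie Lemma \ref{bow.tie} to obtain a bound on $\calH^m$ of every level set $S_y := E \cap g_{\bv_1,\ldots,\bv_{n-m},u}^{-1}\{y\}$ that is uniform in $y \in \R^{n-m}$. Since both $\calY^0_E\bW(u)$ and $\calY_E\bW(u)$ are expressed in terms of such level sets (the former being $\calH^m(S_0)$, the latter a $\liminf$ of averages of $y \mapsto \calH^m(S_y)$), a uniform bound immediately yields the conclusion.

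Concretely, the plan is to fix some convenient $\tau \in (0,1)$, say $\tau = 1/2$, and set $\bdelta_{\ref{upper.bound}}(n,\Lambda) := \bdelta_{\ref{ub.1}}(n,\Lambda,1/2)$. Using the hypothesis $u \in U$, I would apply \ref{ub.1} with the distinguished point $x_0$ taken to be $u$ itself, so that the diameter condition reduces to $\rmdiam(E \cup \{u\}) \leq \bdelta_{\ref{ub.1}}(n,\Lambda,1/2)$, which is exactly what is assumed. Lemma \ref{ub.1} then tells me that, for every $y \in \R^{n-m}$, every $x \in S_y$ and every $\rho > 0$,
\begin{equation*}
S_y \cap \bB(x,\rho) \subset \bB\bigl(x + \bW_0(u), \tfrac{1}{2}\rho\bigr) \,.
\end{equation*}

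With this in hand, the Bow Tie Lemma \ref{bow.tie} applied to $S_y$ with $W = \bW_0(u)$ yields $\calH^m(S_y) \leq (2/\sqrt{3})^m \balpha(m)(\rmdiam S_y)^m \leq (2/\sqrt{3})^m\balpha(m)(\rmdiam E)^m$. Setting $\bc_{\ref{upper.bound}}(m) := (2/\sqrt{3})^m\balpha(m)$, this gives $\calY^0_E\bW(u) = \calH^m(S_0) \leq \bc_{\ref{upper.bound}}(m)(\rmdiam E)^m$ at once. For $\calY_E\bW(u)$, since the bound $\calH^m(S_y) \leq \bc_{\ref{upper.bound}}(m)(\rmdiam E)^m$ is uniform in $y$, each average $\dashint_{C_{j^{-1}}}\calH^m(S_y)\,d\calL^{n-m}(y)$ is bounded by the same quantity, and so is the $\liminf$.

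There is no real obstacle: the whole argument is a direct assembly of two previously established lemmas, one providing the geometric "$\tau$-flatness" of the level sets around $\bW_0(u)$ at small scales, the other turning $\tau$-flatness into a Lipschitz graph bound and hence into the desired Hausdorff measure estimate. The only point requiring mild care is that \ref{ub.1} is stated with an auxiliary point $x_0 \in U$, which is why the present proposition requires $u \in U$: this allows the choice $x_0 = u$ and produces a bow tie structure aligned with $\bW_0(u)$ rather than some extraneous plane, which is the form compatible with a uniform bound over all level sets $S_y$.
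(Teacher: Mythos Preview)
Your proof is correct and follows essentially the same route as the paper: combine \ref{ub.1} (with $\tau=1/2$) and the Bow Tie Lemma \ref{bow.tie} to bound $\calH^m(S_y)$ uniformly in $y$, then read off the bounds for $\calY^0_E\bW(u)$ and $\calY_E\bW(u)$. The only cosmetic difference is that the paper picks the reference point $x_0$ in $E$ (dealing with $E=\emptyset$ separately), whereas you take $x_0=u\in U$; both choices satisfy the hypotheses of \ref{ub.1} and lead to the same constant $\bc_{\ref{upper.bound}}(m)=(2/\sqrt{3})^m\balpha(m)$.
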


\begin{proof}
Let $\bdelta_{\ref{upper.bound}}(n,\Lambda) =\bdelta_{\ref{ub.1}}(n,\Lambda,1/2)$.
Recall the definitions of $\calY^0_E \bW$ and $\calY_E \bW$ from \ref{def.Y0} and \ref{def.Y} respectively. If $E =\emptyset$ the conclusion is obvious. If not pick $x_0 \in E$ arbitrarily. Given any $y \in \R^{n-m}$ we see that \ref{ub.1} applies with $\tau = 1/2$ and in turn the bow-tie lemma \ref{bow.tie} applies to $S = E \cap g_{\bv_1,\ldots,\bv_{n-m},u}^{-1}\{y\}$ and $W = \bW_0(x_0)$. Thus
\begin{equation*}
\calH^m \left(E \cap g_{\bv_1,\ldots,\bv_{n-m},u}^{-1}\{y\} \right) \leq \left( \frac{2}{\sqrt{3}}\right)^m \balpha(m)r^m (\rmdiam E)^m \,.
\end{equation*}
The proposition is proved.
\end{proof}

\begin{Corollary}
\label{cor.ub}
There are $\bdelta_{\theTheorem}(n,\Lambda) > 0$ and $\bc_{\theTheorem}(n) \geq 1$ with the following property. If $\rmdiam E \leq \bdelta_{\theTheorem}(n,\Lambda)$ then
\begin{equation*}
\calZ_E \bW(u) \leq \bc_{\theTheorem}(n) (\rmdiam E)^m
\end{equation*}
for $\calL^n$ almost every $u \in E$.
\end{Corollary}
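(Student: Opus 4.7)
The plan is to combine Proposition \ref{Z.1} (upper bound of $\calZ_E\bW$ by $\calY^0_E\bW$ at small scales) with Proposition \ref{upper.bound} (upper bound of $\calY^0_E\bW$ by $(\rmdiam E)^m$ at small scales). Both are already at our disposal, so the proof is essentially a combination of the two, with a careful choice of the scale threshold $\bdelta$ and the constant $\bc(n)$.

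More precisely, I would first fix any $0 < \veps < 1$, say $\veps = 1/2$, and apply Proposition \ref{Z.1} with this $\veps$ to obtain a threshold $\bdelta_{\ref{Z.1}}(n,\Lambda,1/2) > 0$ with the property that, whenever $\rmdiam E \leq \bdelta_{\ref{Z.1}}(n,\Lambda,1/2)$, one has
\begin{equation*}
\calZ_E \bW(u) \leq \tfrac{3}{2}\, 2^{\frac{n-m}{2}} \bin{n}{n-m}^{\frac{1}{2}} \calY^0_E \bW(u)
\end{equation*}
for $\calL^n$ almost every $u \in E$. Next, from Proposition \ref{upper.bound} we get a threshold $\bdelta_{\ref{upper.bound}}(n,\Lambda) > 0$ and a constant $\bc_{\ref{upper.bound}}(m)$ such that whenever $u \in U$ and $\rmdiam(E \cup \{u\}) \leq \bdelta_{\ref{upper.bound}}(n,\Lambda)$ we have $\calY^0_E \bW(u) \leq \bc_{\ref{upper.bound}}(m)(\rmdiam E)^m$.

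Now I would set $\bdelta_{\ref{cor.ub}}(n,\Lambda) = \min\{\bdelta_{\ref{Z.1}}(n,\Lambda,1/2), \bdelta_{\ref{upper.bound}}(n,\Lambda)\}$ and assume $\rmdiam E \leq \bdelta_{\ref{cor.ub}}(n,\Lambda)$. For any $u \in E$ we have $u \in U$ (recall the common setting \ref{31}, item (2), which ensures $E \subset U$) and $\rmdiam(E \cup \{u\}) = \rmdiam E \leq \bdelta_{\ref{upper.bound}}(n,\Lambda)$. Hence Proposition \ref{upper.bound} applies to every $u \in E$ and gives the pointwise upper bound for $\calY^0_E\bW$. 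Combining this with the earlier inequality yields, for $\calL^n$ almost every $u \in E$,
\begin{equation*}
\calZ_E \bW(u) \leq \tfrac{3}{2}\, 2^{\frac{n-m}{2}} \bin{n}{n-m}^{\frac{1}{2}} \bc_{\ref{upper.bound}}(m)\,(\rmdiam E)^m,
\end{equation*}
so we may take $\bc_{\ref{cor.ub}}(n) = \tfrac{3}{2}\, 2^{\frac{n-m}{2}} \bin{n}{n-m}^{\frac{1}{2}} \bc_{\ref{upper.bound}}(m)$.

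There is no real obstacle here: both ingredients have been established and the corollary is a direct synthesis. The only point deserving a brief check is that $u \in E$ implies $u \in U$, which is immediate from the standing assumption $E \subset U$ in \ref{31}, ensuring that Proposition \ref{upper.bound} may indeed be invoked for every $u$ in the exceptional-null subset of $E$ provided by Proposition \ref{Z.1}.
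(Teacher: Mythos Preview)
Your proof is correct and follows exactly the approach of the paper, which simply sets $\bdelta_{\ref{cor.ub}}(n,\Lambda) = \min\{ \bdelta_{\ref{upper.bound}}(n,\Lambda),\bdelta_{\ref{Z.1}}(n,\Lambda,1/2) \}$ and leaves the remaining details implicit. Your write-up is in fact more explicit than the paper's, including the useful observation that $u \in E \subset U$ and $\rmdiam(E \cup \{u\}) = \rmdiam E$ ensure Proposition~\ref{upper.bound} applies.
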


\begin{proof}
Let $\bdelta_{\ref{cor.ub}}(n,\Lambda) = \min\{ \bdelta_{\ref{upper.bound}}(n,\Lambda),\bdelta_{\ref{Z.1}}(n,\Lambda,1/2) \}$. 
\end{proof}

\section{Lower bound for $\calY_E \bW$ and $\calZ_E \bW$}

\begin{Empty}[Setting for this section]
\label{51}
We enforce again the exact same assumptions as in \ref{31}, and as in \ref{fibration.2} we let $C_r = \R^{n-m} \cap \{ y : |y| \leq r \}$.
\end{Empty}

\begin{Empty}[Polyballs]
\label{pb}
Given $x_0 \in \Rn$ and $r > 0$ we define
\begin{equation*}
\bC_\bW(x_0,r) = \Rn \cap \left\{ x : \left| P_{\bW_0(x_0)}(x-x_0)\right| \leq r \text{ and } \left| P_{\bW_0(x_0)^\perp}(x-x_0)\right| \leq r \right\} \,.
\end{equation*}
We notice that if $x \in \bC_\bW(x_0,r)$ then $|x-x_0| \leq r \sqrt{2}$, in particular $\rmdiam \bC_\bW(x_0,r) \leq 2 \sqrt{2}$. We also notice that $\calL^n\left( \bC_\bW(x_0,r)\right) = \balpha(m)\balpha(n-m)r^n$. 
\end{Empty}

\begin{Empty}
\label{53}
{\it 
Given $0 < \veps < 1$ there exists $\bdelta_{\theTheorem}(n,\Lambda,\veps) > 0$ with the following property. If
\begin{enumerate}
\item $0 < r < \bdelta_{\theTheorem}(n,\Lambda,\veps)$;
\item $u \in \bC_\bW(x_0,r) \subset U$;
\item $|g_{\bv_1,\ldots,\bv_{n-m},u}(x_0)| \leq (1-3\veps)r$;
\item $C \subset C_{\veps r}$ is closed; 
\end{enumerate}
then
\begin{equation*}
\calL^n \left( \bC_\bW(x_0,r) \cap g_{\bv_1,\ldots,\bv_{n-m},u}^{-1}(C)\right) \geq \frac{1}{1+\veps} \balpha(m) r^m \calL^{n-m}(C) \,.
\end{equation*}
}
\end{Empty}

\begin{Remark}
With hopes that the following will help the reader form a geometrical imagery: Under the circumstances \ref{53}, $\bC_\bW(x_0,r) \cap g_{\bv_1,\ldots,\bv_{n-m},u}^{-1}(C)$ may be seen as a <<nonlinear stripe>>, <<horizontal>> with respect to $\bW_0(x_0)$, <<at height>> $g_{\bv_1,\ldots,\bv_{n-m},u}(x_0)$ with respect to $x_0$, and of <<width>> $C$.
\end{Remark}

\begin{proof}[Proof of \ref{53}]
Given $z \in \bW_0(x_0) \cap \bB(0,r)$ we define 
\begin{equation*}
V_z = \Rn \cap \left\{ x_0 + z + \sum_{i=1}^{n-m} y_i \bv_i(x_0) : y \in C_r \right\} \subset \bC_\bW(x_0,r)
\end{equation*}
and we consider the isometric parametrization $\gamma_z : C_r \to V_z$ defined by the formula
\begin{equation*}
\gamma_z(y) = x_0 + z + \sum_{i=1}^{n-m} y_i \bv_i(x_0) \,.
\end{equation*}
We also abbeviate $f_{z,u} = g_{\bv_1,\ldots,\bv_{n-m},u} \circ \gamma_z$.
\par 
\textsc{Claim \#1.} $\rmLip f_{z,u} \leq (1 + \veps)^\frac{1}{n-m}$.
\par 
Since $\gamma_z$ is an isometry it suffices to obtain an upper bound for $\rmLip g_{\bv_1,\ldots,\bv_{n-m},u}|_{\bC_\bW(x_0,r)}$. Let $x,x' \in \bC_\bW(x_0,r)$,
\begin{equation*}
\begin{split}
\big| g_{\bv_1,\ldots,\bv_{n-m},u}(x)& - g_{\bv_1,\ldots,\bv_{n-m},u}(x') \big|  = \sqrt{\sum_{i=1}^{n-m} \left| \la \bv_i(x),x-u\ra - \la \bv_i(x'),x'-u\ra \right|^2} \\
& \leq \sqrt{\sum_{i=1}^{n-m} \left( \left| \la \bv_i(x)-\bv_i(x'),x-u\ra \right| + \left| \la \bv_i(x'),x-x' \ra \right|\right)^2} \\
&\leq \sqrt{\sum_{i=1}^{n-m}\left| \la \bv_i(x)-\bv_i(x'),x-u\ra \right|^2} + \sqrt{\sum_{i=1}^{n-m}\left| \la \bv_i(x'),x-x' \ra \right|^2}\\
& \leq \sqrt{n-m} \Lambda |x-x'||x-u| + \left| P_{\bW_0(x')^\perp}(x-x') \right| \\
& \leq \left( 1 + \sqrt{n-m} \Lambda 2 \sqrt{2}r \right) |x-x'| \,.
\end{split}
\end{equation*}
Recalling hypothesis (1) it is now apparent that $\bdelta_{\ref{53}}$ can be chosen small enough according to $n$, $\Lambda$ and $\veps$ so that \textsc{Claim \#1} holds.
\par 
\textsc{Claim \#2.} {\it For $\calL^{n-m}$ almost every $y \in C_r$ one has $\|Df_{z,u}(y) - \rmid_{\R^{n-m}}\| \leq \veps$}.
\par 
Let $y \in C_r$ be such that $f_{z,u}$ is differentiable at $y$.
We shall estimate the coefficients of the matrix representing $Df_{z,u}(y)$ with respect to the canonical basis. Fix $i,j=1,\ldots,n-m$ and recall \eqref{eq.2}:
\begin{equation*}
\begin{split}
\frac{\partial}{\partial y_i} \la f_{u,z}(y) , e_j \ra & = \frac{\partial}{\partial y_i} \la g_{\bv_1,\ldots,\bv_{n-m},u}(\gamma_z(y)) , e_j \ra \\
& = \frac{\partial g_{\bv_j,u}}{\partial y_i}(\gamma_z(y)) \\
& = \left\la \nabla g_{\bv_j,u}(\gamma_z(y)) , \frac{\partial \gamma_z(y)}{\partial y_i} \right\ra \\
& = \left\la D\bv_j(\gamma_z(y))(\bv_i(x_0)) , \gamma_z(y)-u \right\ra + \left\la \bv_j(\gamma_z(y)) , \bv_i(x_0) \right\ra \\
& = \rmI + \rmII \,.
\end{split}
\end{equation*}
Next notice that
\begin{multline*}
\left| \rmII - \delta_{ij} \right| = \left| \rmII - \la \bv_j(x_0) , \bv_i(x_0) \ra \right| = \left| \la \bv_j(\gamma_z(y)) - \bv_j(x_0) , \bv_i(x_0) \ra \right| \\
\leq \Lambda \left| \gamma_z(y) - x_0 \right| \leq \Lambda 2 \sqrt{2} r \leq \frac{\veps}{2(n-m)}
\end{multline*}
where the last inequality follows from hypothesis (1) upon choosing $\bdelta_{\ref{53}}$ small enough according to $n$, $\Lambda$ and $\veps$. Moreover,
\begin{equation*}
\left| \rmI \right| \leq \Lambda | \gamma_z(y)-u| \leq \Lambda 2 \sqrt{2} r \leq \frac{\veps}{2(n-m)}\,.
\end{equation*}
Therefore if $(a_{ij})_{i,j=1,\ldots,n-m}$ is the matrix representing $Df_{z,u}(y)$ with respect to the canonical basis we have shown that $|a_{ij}-\delta_{ij}| \leq \frac{\veps}{n-m}$ for all $i,j=1,\ldots,n-m$. This completes the proof of \textsc{Claim \#2}.
\par 
\textsc{Claim \#3.} $C_{\veps r } \subset f_{z,u}(C_r)$.
\par 
We shall show that $|y - f_{z,u}(y)| \leq  (1-\veps)r$ for every $y \in \rmBdry C_r$ and the conclusion will become a consequence of the Intermediate Value Theorem in case $m=n-1$, and a standard application of homology theory, see e.g. \cite[4.6.1]{DEP.05c} in case $m < n-1$. If $m < n-1$ it is clearly enough to establish this inequality for $\calH^{n-m-1}$ almost every $y \in \rmBdry C_r$: In that case, owing to the Coarea Theorem \cite[3.2.22]{GMT} we choose such $y$ in order that $f_{z,u}$ is differentiable $\calH^1$ almost everywhere on the line segment $\R^{n-m} \cap \{ sy : 0 \leq s \leq 1 \}$. Whether $m < n-1$ or $m=n-1$ it then follows from \textsc{Claim \#2} that
\begin{multline*}
\left| f_{z,u}(y) - f_{z,u}(0) - y  \right| = \left| \int_0^1 Df_{z,u}(sy)(y) d\calL^1(s) - y \right| \\ \leq \int_0^1 \left| Df_{z,u}(sy)(y)-y\right| d\calL^1(s) \leq \veps |y| = \veps r \,.
\end{multline*}
Accordingly,
\begin{equation*}
\left|f_{z,u}(y) - y \right| \leq \left| f_{z,u}(y) - f_{z,u}(0) - y  \right| + \left| f_{z,u}(0)\right| \leq \veps r + \left| f_{z,u}(0)\right|\,,
\end{equation*}
and the claim will be established upon showing that $\left| f_{z,u}(0)\right| \leq (1- 2 \veps)r$. Note that $f_{z,u}(0) = g_{\bv_1,\ldots,\bv_{n-m},u}(x_0+z)$, and we shall use hypothesis (3) to bound its norm from above. Given $j=1,\ldots,n-m$ recall that $\la \bv_j(x_0) , z \ra = 0$ thus
\begin{multline*}
\left| g_{\bv_j,u}(x_0+z) - g_{\bv_j,u}(x_0) \right| = \left| \la \bv_j(x_0+z) , x_0+z-u \ra - \la \bv_j(x_0) , x_0 - u \ra \right| \\
 = \left| \la \bv_j(x_0+z) , x_0+z-u \ra - \la \bv_j(x_0) , x_0 + z - u \ra \right| \leq \Lambda |z| |x_0 + z - u| \\
 \leq \Lambda r 2\sqrt{2} r \leq \frac{\veps r}{\sqrt{n-m}}
\end{multline*}
where the last inequality holds according to hypothesis (1) provided $\bdelta_{\ref{53}}$ is chosen sufficiently small. In turn 
\begin{multline*}
\left| f_{z,u}(0) \right| \leq \left| g_{\bv_1,\ldots,\bv_{n-m},u}(x_0+z) - g_{\bv_1,\ldots,\bv_{n-m},u}(x_0) \right| + \left| g_{\bv_1,\ldots,\bv_{n-m},u}(x_0)\right| \\
\leq \veps r + (1-3\veps)r = (1-2\veps ) r
\end{multline*}
according to hypothesis (3).
\par 
\textsc{Claim \#4.} {\it For every $z \in \bW_0(x_0) \cap \bB(0,r)$ and every closed $C \subset C_{\veps r}$ one has $\calH^{n-m}(C) \leq (1+\veps) \calH^{n-m} \left( g_{\bv_1,\ldots,\bv_{n-m},u}^{-1}(C) \cap V_z \right)$.}
\par 
First notice that
\begin{equation*}
g_{\bv_1,\ldots,\bv_{n-m},u}^{-1}(C) \cap V_z = \gamma_z \bigg( \gamma_z^{-1} \left(g_{\bv_1,\ldots,\bv_{n-m},u}^{-1}(C) \cap V_z \right) \bigg) = \gamma_z \left( f_{z,u}^{-1}(C)\right)
\end{equation*}
and therefore
\begin{equation*}
\calH^{n-m} \left(g_{\bv_1,\ldots,\bv_{n-m},u}^{-1}(C) \cap V_z \right) = \calH^{n-m}  \left( f_{z,u}^{-1}(C)\right)
\end{equation*}
since $\gamma_z$ is an isometry. Now since $C \subset C_{\veps r} \subset f_{z,u}(C_r)$ according to \textsc{Claim \#3} we have
\begin{equation*}
C = f_{z,u} \left( f_{z,u}^{-1}(C) \right) \,.
\end{equation*}
It therefore follows from \textsc{Claim \#1} that
\begin{equation*}
\begin{split}
\calH^{n-m}(C) & \leq \left( \rmLip f_{z,u} \right)^{n-m} \calH^{n-m}  \left( f_{z,u}^{-1}(C)\right) \\
& \leq (1+\veps) \calH^{n-m} \left(g_{\bv_1,\ldots,\bv_{n-m},u}^{-1}(C) \cap V_z \right)\,.
\end{split}
\end{equation*}
\par 
We are now ready to finish the proof by an application of Fubini's Theorem :
\begin{equation*}
\begin{split}
\calL^n \bigg( \bC_\bW(x_0,r) \cap & g_{\bv_1,\ldots,\bv_{n-m},u}^{-1}(C)\bigg)\\& = \int_{\bW_0(x_0) \cap \bB(0,r)} d\calL^m(z) \calH^{n-m}\left(g_{\bv_1,\ldots,\bv_{n-m},u}^{-1}(C) \cap V_z \right) \\
& \geq \frac{1}{1+\veps} \balpha(m)r^m \calH^{n-m}(C) \,.
\end{split}
\end{equation*}
\end{proof}

\begin{Empty}[Lower bound for $\calY_E \bW$]
\label{54}
{\it 
Given $0 < \veps < 1/3$ there exists $\bdelta_{\theTheorem}(n,\Lambda,\veps)>0$ with the following property. If
\begin{enumerate}
\item $0 < r < \bdelta_{\theTheorem}(n,\Lambda,\veps)$;
\item $\bC_\bW(x_0,r) \subset U$;
\item $A \subset U$ is closed;
\item $\calL^n \left( A \cap \bC_\bW(x_0,r) \right) \geq (1 - \veps) \calL^n \left(\bC_\bW(x_0,r) \right)$;
\end{enumerate}
then
\begin{equation*}
\int_{\bC_\bW(x_0,r)} \calY_{A \cap \bC_\bW(x_0,r)} \bW (u) d\calL^n(u) \geq (1 - \bc_{\theTheorem}(n) \veps) \balpha(m)r^m\calL^n \left( \bC_\bW(x_0,r) \right) \,,
\end{equation*}
where $\bc_{\theTheorem}(n)=5+6n$.
}
\end{Empty}

\begin{proof}
Similarly to the proof of \ref{53} we will first establish a lower bound for $\calY_{A \cap \bC_\bW(x_0,r)} \bW$ on <<vertical slices>> $V_z$ of the given polyball and then apply Fubini. Given $z \in \bW_0(x_0) \cap \bB(0,r)$ we let $V_z$ and $\gamma_z$ be as in \ref{53} and we also define
\begin{equation*}
\check{V}_z = \Rn \cap \left\{ x_0 + z + \sum_{i=1}^{n-m} y_i \bv_i(x_0) : y \in C_{(1-3\veps)r}\right\}
\end{equation*}
(notice it is slightly smaller than $V_z$ used in the proof of \ref{53}) and we consider the isometric parametrization $\check{\gamma}_z : C_{(1-3\veps)r} \to \check{V}_z$ defined by
\begin{equation*}
\check{\gamma}_z(y) = x_0 + z + \sum_{i=1}^{n-m} y_i \bv_i(x_0) \,.
\end{equation*}
For part of the proof we find it convenient to abbreviate $E = A \cap \bC_\bW(x_0,r)$.
We also let $\check {\calY}_{E} \bW = \left( \calY_{E} \bW \right) \circ \check{\gamma}_z$.
\par 
By definition of $\calY_E \bW$ for each $\check{\gamma}_z(y) \in \check{V}_z$ there exists a collection $\calC_y$ of closed balls in $\R^{n-m}$ with the following properties: For every $C \in \calC_y$, $C$ is a ball centered at 0, $C \subset C_{\veps r}$,
\begin{equation*}
\calY_E \bW \left(\check{\gamma}_z(y) \right) + \veps \geq \dashint_C \calH^m \left( E \cap g_{\bv_1,\ldots,\bv_{n-m},\check{\gamma}_z(y)}^{-1}\{h\} \right) d\calL^{n-m}(h) \,,
\end{equation*}
and $\inf \{ \rmdiam C : C \in \calB_y \} = 0$.
Furthermore $\check{\calY}_E \bW$ being $\calL^{n-m}$ summable according to \ref{upper.bound} there exists $N \subset C_{(1-3\veps)r}$ such that $\calL^{n-m}(N)=0$ and every $y \not \in N$ is a Lebesgue point of $\check{\calY}_E \bW$. For such $y$ we may reduce $\calC_y$ if necessary, keeping all the previously stated properties and enforcing that
\begin{equation*}
\dashint_{y+C} \check{\calY}_E \bW d\calL^{n-m} + \veps \geq \left(\check{\calY}_E \bW \right)(y)
\end{equation*}
whenever $C \in \calC_y$.
We infer that for each $y \in C_{(1-3\veps)r} \setminus N$ and each $C \in \calC_y$,
\begin{equation}
\label{eq.30}
\int_{y + C} \check{\calY}_E \bW d\calL^{n-m} + 2\veps \calL^{n-m}(y+C) \geq \int_C \calH^m \left( E \cap g_{\bv_1,\ldots,\bv_{n-m},\check{\gamma}_z(y)}^{-1}\{h\} \right) d\calL^{n-m}(h)\,.
\end{equation}
It follows from the Vitali Covering Theorem that there is a sequence $(y_k)_k$ in $C_{(1-3\veps)r} \setminus N$, and $C_k \in \calC_{y_k}$, such that the balls $y_k + C_k$, $k=1,2,\ldots,$ are pairwise disjoint, and $\calL^{n-m} \left( C_{(1-3\veps)r} \setminus \cup_{k=1}^\infty (y_k+C_k) \right) = 0$. It therefore follows from \eqref{eq.30} and the fact that $\gamma_z$ is an isometry that
\begin{equation}
\label{eq.31}
\int_{V_z} \calY_E \bW d\calH^{n-m} +2\veps \calH^{n-m}(V_z) \geq \sum_{k=1}^\infty \int_{C_k} \calH^m \left( E \cap g_{\bv_1,\ldots,\bv_{n-m},u_k}^{-1}\{y\} \right) d\calL^{n-m}(y)\,,
\end{equation}
where we have abbreviated $u_k = \check{\gamma}_z(y_k)$. We also abbreviate $S_k = g_{\bv_1,\ldots,\bv_{n-m},u_k}^{-1}(C_k)$ and we infer from the coarea formula that for each $k=1,2,\ldots$,
\begin{multline}
\label{eq.32}
\int_{C_k} \calH^m \left( E \cap g_{\bv_1,\ldots,\bv_{n-m},u_k}^{-1}\{y\} \right) d\calL^{n-m}(y) = \int_{E \cap S_k} Jg_{\bv_1,\ldots,\bv_{n-m},u_k}d\calL^n \\
\geq (1-\veps) \calL^n \left( E \cap S_k\right)
\end{multline}
where the last inequality follows from \ref{jac.g} applied with $U = \rmInt \bC_\bW(x_0,r)$ provided that $\bdelta_{\ref{54}}(n,\Lambda,\veps)$ is chosen smaller than $(2\sqrt{2})^{-1}\bdelta_{\ref{jac.g}}(n,\Lambda,\veps)$. Letting $S = \cup_{k=1}^\infty S_k$, and recalling that $E = A \cap \bC_\bW(x_0,r)$, we infer from \eqref{eq.31} and \eqref{eq.32} that
\begin{multline}
\label{eq.33}
\int_{V_z} \calY_E \bW d\calH^{n-m} +2\veps \calH^{n-m}(V_z) \geq (1-\veps) \calL^n(E \cap S) \\
 \geq (1-\veps) \big( \calL^n ( \bC_\bW(x_0,r) \cap S) - \calL^n( \bC_\bW(x_0,r) \setminus A ) \big)\,.
\end{multline}
Applying \ref{53} to each $S_k$ does not immediately yield a lower bound for $\calL^n ( \bC_\bW(x_0,r) \cap S)$ because the $S_k$ are not necessarily pairwise disjoint. This is why we now introduce slightly smaller versions of these:
\begin{equation*}
\check{C}_k = (1-\veps) C_k \quad \text{ and } \quad \check{S}_k = g_{\bv_1,\ldots,\bv_{n-m},u_k}^{-1}\left(\check{C}_k\right) \,.
\end{equation*}
\par 
\textsc{Claim.} {\it The sets $\check{S}_k \cap \bC_\bW(x_0,r)$, $k=1,2,\ldots$, are pairwise disjoint.}
\par 
Assume if possible that there are $j \neq k$ and $x \in \check{S}_j \cap \check{S}_k \cap \bC_\bW(x_0,r)$. Letting $\rho_j$ and $\rho_k$ denote respectively the radius of $C_j$ and $C_k$ we notice that $\rho_j + \rho_k < |y_j-y_k|$ because $(y_j+C_j) \cap (y_k+C_k) = \emptyset$. Since $\check{\gamma}_z$ is an isometry we have $|u_j-u_k| = \left| \check{\gamma}_z(y_j) - \check{\gamma}_z(y_k)\right| = |y_j-y_k|$ and therefore also
\begin{multline}
\label{eq.34}
\left| g_{\bv_1,\ldots,\bv_{n-m},u_j}(x) - g_{\bv_1,\ldots,\bv_{n-m},u_k}(x)\right| \leq \left| g_{\bv_1,\ldots,\bv_{n-m},u_j}(x) \right| + \left| g_{\bv_1,\ldots,\bv_{n-m},u_k}(x) \right|\\
\leq (1-\veps)\rho_j + (1-\veps)\rho_k < (1-\veps) \left| u_j - u_k \right| \,.
\end{multline}
We now introduce the following vectors of $\R^{n-m}$,
\begin{equation*}
h_j = \sum_{i=1}^{n-m} \la \bv_i(x_0),u_j \ra e_i \quad \text{ and } \quad h_k = \sum_{i=1}^{n-m} \la \bv_i(x_0),u_k \ra e_i
\end{equation*}
and we notice that
\begin{equation*}
\left| h_j - h_k \right| = \left| P_{\bW_0(x_0)^\perp}(u_j-u_k)\right| = |u_j-u_k|
\end{equation*}
where the second equality holds because $u_j-u_k \in \bW_0(x_0)^\perp$ as clearly follows from the definition of $\check{\gamma}_z$. Furthermore
\begin{multline*}
\bigg| \big( g_{\bv_1,\ldots,\bv_{n-m},u_j}(x) - g_{\bv_1,\ldots,\bv_{n-m},u_k}(x)\big) - \big( h_j - h_k \big)  \bigg| 
= \sqrt{\sum_{i=1}^{n-m} \left| \la \bv_i(x) - \bv_i(x_0) , u_k-u_j \ra\right|^2}\\
\leq \sqrt{n-m} \Lambda \sqrt{2} r \left| u_j - u_k \right| \leq \veps \left| u_j - u_k \right| \,,
\end{multline*}
since we may choose $\bdelta_{\ref{54}}(n,\Lambda,\veps)$ to be so small that the last inequality holds according to hypothesis (1). Whence
\begin{equation*}
\left| g_{\bv_1,\ldots,\bv_{n-m},u_j}(x) - g_{\bv_1,\ldots,\bv_{n-m},u_k}(x)\right| \geq \left| h_j - h_k \right| -  \veps \left| u_j - u_k \right| = (1-\veps)  \left| u_j - u_k \right|
\end{equation*}
in contradiction with \eqref{eq.34}. The \textsc{Claim} is established.
\par 
Thus
\begin{equation}
\label{eq.35}
\begin{split}
\calL^n \left( \bC_\bW(x_0,r) \cap S\right) & = \calL^n \left( \bC_\bW(x_0,r) \cap \cup_{k=1}^\infty S_k\right) \\
& \geq \calL^n \left( \bC_\bW(x_0,r) \cap \cup_{k=1}^\infty \check{S}_k\right) \\
& = \sum_{k=1}^\infty \calL^n \left( \bC_\bW(x_0,r) \cap  \check{S}_k\right) \\
& = \sum_{k=1}^\infty \calL^n \left( \bC_\bW(x_0,r) \cap g_{\bv_1,\ldots,\bv_{n-m},u_k}^{-1}\left(\check{C}_k\right) \right) \\
& \geq \frac{1}{1+\veps}\balpha(m)r^m \sum_{k=1}^\infty \calL^{n-m} \left( \check{C}_k \right)
\end{split}
\end{equation}
where the last ineqality follows from \ref{53}. We notice that indeed \ref{53} applies since $\check{C}_k \subset C_k \subset C_{\veps r}$ and $\left|g_{\bv_1,\ldots,\bv_{n-m},u_k}(x_0)\right| = \left| P_{\bW_0(x_0)^\perp}(u_k-x_0) \right| = \left| y_k\right| \leq (1-3\veps)r$.
\par 
Now,
\begin{multline}
\label{eq.36}
\sum_{k=1}^\infty \calL^{n-m} \left( \check{C}_k \right)  = (1-\veps)^{n-m} \sum_{k=1}^\infty \calL^{n-m} \left( C_k \right) = (1-\veps)^{n-m} \sum_{k=1}^\infty \calL^{n-m} \left( y_k + C_k \right) \\
\geq (1-\veps)^{n-m} \calL^{n-m} \left( C_{(1-3\veps)r} \right) \geq (1-3\veps)^{2(n-m)} \balpha(n-m)r^{n-m} \,.
\end{multline}
We infer from \eqref{eq.35} and \eqref{eq.36} that
\begin{equation*}
\calL^n \left( \bC_\bW(x_0,r) \cap S\right) \geq \frac{(1-3\veps)^{2(n-m)}}{1+\veps}\calL^n \left( \bC_\bW(x_0,r)\right) \,.
\end{equation*}
It therefore ensues from \eqref{eq.33} and hypothesis (4) that
\begin{equation*}
\int_{V_z} \calY_E \bW d\calH^{n-m} +2\veps \calH^{n-m}(V_z) \geq (1-\veps)^2\frac{(1-3\veps)^{2(n-m)}}{1+\veps}\calL^n \left( \bC_\bW(x_0,r)\right) \,.
\end{equation*}
Integrating over $z$ we infer from Fubini's Theorem
\begin{multline*}
\int_{\bC_\bW(x_0,r)} \calY_{A \cap \bC_\bW(x_0,r)} \bW d\calL^n = \int_{\bW_0(x_0) \cap \bB(0,r)} d\calL^n(z) \int_{V_z} \calY_E \bW d\calH^{n-m} \\
\geq \left( (1-\veps)^2\frac{(1-3\veps)^{2(n-m)}}{1+\veps}-2\veps\right)\balpha(m)r^m\calL^n \left( \bC_\bW(x_0,r)\right) \,.
\end{multline*}
\end{proof}

\begin{Proposition}
\label{lower.bound}
Given $0 < \veps < 1/3$ there exist $\bdelta_{\theTheorem}(n,\Lambda,\veps)>0$ and $\bc_{\theTheorem}(n) \geq 1$ with the following property. If
\begin{enumerate}
\item $0 < r < \bdelta_{\theTheorem}(n,\Lambda,\veps)$;
\item $\bC_\bW(x_0,r) \subset U$;
\item $A \subset U$ is closed;
\item $\calL^n \left( A \cap \bC_\bW(x_0,r) \right) \geq (1 - \veps) \calL^n \left(\bC_\bW(x_0,r) \right)$;
\end{enumerate}
then
\begin{equation*}
\int_{A \cap \bC_\bW(x_0,r)} \calY_{A \cap \bC_\bW(x_0,r)} \bW (u) d\calL^n(u) \geq (1 - \bc_{\theTheorem}(n) \veps) \balpha(m)r^m\calL^n \left( \bC_\bW(x_0,r) \right) \,.
\end{equation*}
\end{Proposition}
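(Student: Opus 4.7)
The plan is to deduce Proposition~\ref{lower.bound} directly from \ref{54} by showing that the integral over the exceptional set $\bC_\bW(x_0,r) \setminus A$ is of order $\veps r^m \calL^n(\bC_\bW(x_0,r))$, so that subtracting it from the estimate of \ref{54} leaves a lower bound of the same form with merely a larger constant in front of $\veps$.

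First, I would choose $\bdelta_{\ref{lower.bound}}(n,\Lambda,\veps) = \min\{\bdelta_{\ref{54}}(n,\Lambda,\veps),(2\sqrt{2})^{-1}\bdelta_{\ref{upper.bound}}(n,\Lambda)\}$, so that under hypotheses (1)--(4) both \ref{54} and \ref{upper.bound} are applicable in the present setting. Indeed, since $\bC_\bW(x_0,r) \subset U$ and $\rmdiam \bC_\bW(x_0,r) \leq 2\sqrt{2}r$, for each $u \in \bC_\bW(x_0,r)$ the set $E = A \cap \bC_\bW(x_0,r)$ satisfies $\rmdiam(E \cup \{u\}) \leq 2\sqrt{2}r \leq \bdelta_{\ref{upper.bound}}(n,\Lambda)$. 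Consequently, by \ref{upper.bound},
\begin{equation*}
\calY_{A \cap \bC_\bW(x_0,r)} \bW(u) \leq \bc_{\ref{upper.bound}}(m) (2\sqrt{2})^m r^m
\end{equation*}
for every $u \in \bC_\bW(x_0,r)$.

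Combining this uniform upper bound with hypothesis (4), which yields $\calL^n(\bC_\bW(x_0,r) \setminus A) \leq \veps \calL^n(\bC_\bW(x_0,r))$, we obtain
\begin{equation*}
\int_{\bC_\bW(x_0,r) \setminus A} \calY_{A \cap \bC_\bW(x_0,r)} \bW \, d\calL^n \leq \bc_{\ref{upper.bound}}(m)(2\sqrt{2})^m \veps \, r^m \calL^n(\bC_\bW(x_0,r)).
\end{equation*}
Subtracting this from the estimate supplied by \ref{54} gives
\begin{equation*}
\int_{A \cap \bC_\bW(x_0,r)} \calY_{A \cap \bC_\bW(x_0,r)} \bW \, d\calL^n \geq \bigl(\balpha(m) - \balpha(m)\bc_{\ref{54}}(n)\veps - \bc_{\ref{upper.bound}}(m)(2\sqrt{2})^m \veps\bigr) r^m \calL^n(\bC_\bW(x_0,r)).
\end{equation*}
Defining $\bc_{\ref{lower.bound}}(n)$ so that $\bc_{\ref{lower.bound}}(n)\balpha(m) \geq \balpha(m)\bc_{\ref{54}}(n) + \bc_{\ref{upper.bound}}(m)(2\sqrt{2})^m$ (which is possible since $m \leq n-1$ and both constants depend only on $n$) yields the desired inequality.

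There is essentially no real obstacle here; the step that requires a little care is merely checking that the scale $r$ is small enough for both ancillary propositions to apply simultaneously, which is why $\bdelta_{\ref{lower.bound}}$ is taken to be the minimum of the two relevant thresholds. The substance of the argument has already been established in \ref{54}, and \ref{upper.bound} supplies exactly the uniform $\calL^\infty$ bound needed to discard the small-measure set $\bC_\bW(x_0,r) \setminus A$.
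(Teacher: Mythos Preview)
Your proof is correct and is precisely the argument the paper intends: the paper's own proof consists only of the sentence ``the reader will happily check that $\bdelta_{\ref{lower.bound}}(n,\Lambda,\veps) = \min \{ \bdelta_{\ref{54}}(n,\Lambda,\veps), (2\sqrt{2})^{-1} \bdelta_{\ref{upper.bound}}(n,\Lambda)\}$ suits their needs,'' and you have carried out exactly that check, using the uniform bound from \ref{upper.bound} to control the integral over $\bC_\bW(x_0,r)\setminus A$ and subtracting it from the estimate of \ref{54}.
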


\begin{Remark}
The difference with \ref{54} is the domain of integration (being smaller) in the integral on the left hand side in the conclusion.
\end{Remark}

\begin{proof}[Proof of \ref{lower.bound}]
The reader will happily check that 
\begin{equation*}
\bdelta_{\ref{lower.bound}}(n,\Lambda,\veps) = \min \left\{ \bdelta_{\ref{54}}(n,\Lambda,\veps), \left(2\sqrt{2}\right)^{-1} \bdelta_{\ref{upper.bound}}(n,\Lambda)\right\}
\end{equation*}
suits their needs.
\end{proof}

\begin{Proposition}
\label{Z.positive}
There exists $\bdelta_{\theTheorem}(n,\Lambda) > 0$ with the following property. If $\rmdiam E \leq \bdelta_{\theTheorem}(n,\Lambda)$ then
\begin{equation*}
\calZ_E \bW(u) > 0
\end{equation*}
for $\calL^n$ almost every $u \in E$.
\end{Proposition}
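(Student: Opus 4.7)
The plan is to reduce the statement to the corresponding positivity of $\calY_E \bW$ via Corollary \ref{lb.2}, and then obtain a contradiction by combining Proposition \ref{lower.bound} with the monotonicity $\calY_F \bW \leq \calY_E \bW$ for $F \subset E$, evaluated at a Lebesgue density point of the putative exceptional set. Concretely, choose $\bdelta_{\ref{Z.positive}}(n,\Lambda) := \bdelta_{\ref{lb.1}}(n,\Lambda,1/2)$, so that Corollary \ref{lb.2} (with $\veps = 1/2$) yields
\[
\calZ_E \bW(u) \;\geq\; 2^{-1-\frac{2n-m}{2}}\,\calY_E \bW(u)
\]
for $\calL^n$-almost every $u \in E$. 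It therefore suffices to show that $\calY_E \bW(u) > 0$ for $\calL^n$-a.e.\ $u \in E$.

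Suppose for contradiction that $Z := E \cap \{u : \calY_E \bW(u) = 0\}$ has $\calL^n(Z) > 0$. Because of the two-sided inclusion $\bB(x_0,r) \subset \bC_\bW(x_0,r) \subset \bB(x_0,r\sqrt{2})$ (see \ref{pb}), the polyballs are comparable to Euclidean balls, so the standard Lebesgue density theorem implies that $\calL^n$-a.e.\ $x_0 \in Z$ is a polyball-density point:
\[
\lim_{r \to 0^+} \frac{\calL^n\bigl(Z \cap \bC_\bW(x_0,r)\bigr)}{\calL^n\bigl(\bC_\bW(x_0,r)\bigr)} \;=\; 1\,.
\]
Fix such an $x_0$ and choose once and for all $\veps_0 \in (0,1/3)$ small enough that $1 - \bc_{\ref{lower.bound}}(n)\veps_0 > 0$. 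Since $x_0 \in E \subset U$ and $U$ is open, we can then pick $r > 0$ such that simultaneously $r < \bdelta_{\ref{lower.bound}}(n,\Lambda,\veps_0)$, $\bC_\bW(x_0,r) \subset U$, and
\[
\calL^n\bigl(Z \cap \bC_\bW(x_0,r)\bigr) \;\geq\; (1 - \tfrac{\veps_0}{2})\,\calL^n\bigl(\bC_\bW(x_0,r)\bigr)\,.
\]

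By inner regularity of $\calL^n$ select a compact set $A \subset Z \cap \bC_\bW(x_0,r)$ with $\calL^n(A) \geq (1 - \veps_0)\calL^n\bigl(\bC_\bW(x_0,r)\bigr)$. Then $A$ is closed and contained in $U$, and satisfies hypothesis (4) of Proposition \ref{lower.bound}. That proposition gives
\[
\int_{A \cap \bC_\bW(x_0,r)} \calY_{A \cap \bC_\bW(x_0,r)} \bW(u)\,d\calL^n(u) \;\geq\; \bigl(1 - \bc_{\ref{lower.bound}}(n)\veps_0\bigr)\balpha(m)\,r^m\,\calL^n\bigl(\bC_\bW(x_0,r)\bigr) \;>\; 0\,.
\]
On the other hand, $A \subset \bC_\bW(x_0,r)$, hence $A \cap \bC_\bW(x_0,r) = A$; and $A \subset Z \subset E$, so by the monotonicity of $F \mapsto \calY_F \bW$ recorded at the end of \ref{def.Y},
\[
0 \;\leq\; \calY_A \bW(u) \;\leq\; \calY_E \bW(u) \;=\; 0 \qquad \text{for every } u \in A\,.
\]
The left-hand integral therefore vanishes, yielding the desired contradiction.

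The substantive content has all been accomplished in the preceding sections: the density-point step is classical, and the real work is packaged into Proposition \ref{lower.bound} together with Corollary \ref{lb.2}. The only mildly delicate point is verifying that the Lebesgue density theorem transfers to the polyball basis, which follows at once from the two-sided comparison with Euclidean balls.
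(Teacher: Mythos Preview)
Your argument is correct and follows essentially the same route as the paper's proof: reduce to positivity of $\calY_E\bW$ via Corollary \ref{lb.2}, then derive a contradiction by combining a polyball-density point of the zero set with Proposition \ref{lower.bound} and the monotonicity of $F\mapsto\calY_F\bW$. The only cosmetic differences are that the paper first passes to a compact subset of $Z$ and then selects a density point (you do these in the opposite order), and the paper folds a $\bdelta_{\ref{lower.bound}}$ term into $\bdelta_{\ref{Z.positive}}$ whereas you (equivalently) impose the smallness directly on $r$.
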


\begin{proof}
We let 
\begin{equation*}
\bdelta_{\ref{Z.positive}}(n,\Lambda) = \min \left\{ \bdelta_{\ref{lower.bound}}\left(n,\Lambda,\frac{1}{4\bc_{\ref{lower.bound}}(n)}\right) , \bdelta_{\ref{lb.2}}(n,\Lambda,1/2) \right\} \,.
\end{equation*}
According to \ref{lb.2} it suffices to show that $\calY_E \bW (u) > 0$ for $\calL^n$ almost every $u \in E$. Define $Z = E \cap \{ u : \calY_E \bW(u) = 0 \}$ and assume if possible that $\calL^n(Z) > 0$. Since $Z$ is $\calL^n$ measurable (recall \ref{def.Y}) there exists a compact set $A \subset Z$ such that $\calL^n(A) > 0$. Observe that the sets $\bC_\bW(x,r)$, $x \in U$ and $r > 0$, form a derivation basis for $\calL^n$ measurable subsets of $U$ (because their excentricity is bounded away from zero) thus there exists $x_0 \in A$ and $r_0 > 0$ such that
\begin{equation*}
\calL^n \left( A \cap \bC_\bW(x_0,r) \right) \geq \left(1 - \frac{1}{4\bc_{\ref{lower.bound}}(n)} \right) \calL^n \left( \bC_\bW(x_0,r)\right)
\end{equation*}
whenever $0 < r < r_0$. There is no restriction to assume that $r_0$ is small enough for $\bC_\bW(x_0,r_0) \subset U$. Thus if we let $r= \min \{ r_0 , \bdelta_{\ref{lower.bound}}(n,\Lambda,1/(4 \bc_{\ref{lower.bound}(n)}))\}$ it follows from \ref{lower.bound} that
\begin{equation}
\label{eq.40}
\int_{A \cap \bC_\bW(x_0,r)} \calY_{A \cap \bC_\bW(x_0,r)} \bW (u) d\calL^n(u) \geq \left(1 -  \frac{1}{4}\right) \balpha(m)r^m\calL^n \left( \bC_\bW(x_0,r) \right) > 0 \,.
\end{equation}
On the other hand recalling \ref{def.Y} and the fact that $A \cap \bC_\bW(x_0,r) \subset E$ we infer that $\calY_{A \cap \bC_\bW(x_0,r)} \bW(u) \leq \calY_E(u)$ for all $u \in \Rn$. In particular $\calY_{A \cap \bC_\bW(x_0,r)} \bW(u)=0$ for all $u \in A \cap \bC_\bW(x_0,r) \subset Z$, contradicting \eqref{eq.40}. 
\end{proof}

\section{Proof of the theorem}

\begin{Theorem}
Assume that $S \subset \Rn$, $\bW_0 : S \to \bG(n,m)$ is Lipschitz and $A \subset S$ is Borel. The following are equivalent.
\begin{enumerate}
\item $\calL^n(A)=0$.
\item For $\calL^n$ almost every $x \in A$, $\calH^m(A \cap \bW(x))=0$.
\item For $\calL^n$ almost every $x \in S$, $\calH^m(A \cap \bW(x))=0$.
\end{enumerate}
\end{Theorem}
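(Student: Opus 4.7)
My plan is to establish the three implications $(1) \Rightarrow (3) \Rightarrow (2) \Rightarrow (1)$, with most of the work concentrated in the last one.

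The implication $(3) \Rightarrow (2)$ is essentially automatic: since $A \subset S$ is Borel, and $x \mapsto \calH^m(A \cap \bW(x))$ is Borel measurable (extend $\bW_0$ Lipschitz-continuously to an open neighborhood of $S$ via \ref{grassmann} and then apply \ref{second.measurability}), the exceptional set for (3) contains the exceptional set for (2), so vanishing of the former's $\calL^n$ measure transfers to the latter.

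For $(1) \Rightarrow (3)$, I will invoke Proposition \ref{AC.1} on the exhaustion $S = \bigcup_{k \geq 1} E_k$ with $E_k = S \cap \bB(0,k)$. Each $\phi_{E_k,\bW}$ is then a finite Borel measure absolutely continuous with respect to $\calL^n$. Assuming $\calL^n(A) = 0$, one has $\phi_{E_k,\bW}(A) = 0$, i.e.
\begin{equation*}
\int_{E_k} \calH^m\bigl(A \cap \bW(x)\bigr)\,d\calL^n(x) = 0 \,,
\end{equation*}
so $\calH^m(A \cap \bW(x)) = 0$ for $\calL^n$ a.e.\ $x \in E_k$; taking a countable union over $k$ produces (3).

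The heart of the argument is $(2) \Rightarrow (1)$, which I will attack by contradiction using Proposition \ref{Z.positive}. Suppose $\calL^n(A) > 0$. First extend $\bW_0$ Lipschitz-continuously to an open neighborhood of $S$ (by \ref{grassmann}), and cover this neighborhood by countably many open sets $U_j$ on each of which \ref{orth.frame} furnishes a Lipschitz orthonormal frame with some Lipschitz constant $\Lambda_j$, so that the common setting \ref{31} is realized on each $U_j$. Some $U_j$ must satisfy $\calL^n(A \cap U_j) > 0$; fix such $j$ and write $U = U_j$, $\Lambda = \Lambda_j$. Intersect $A \cap U$ with a cubical grid of side length $\bdelta_{\ref{Z.positive}}(n,\Lambda)/(2\sqrt{n})$; by pigeonhole, at least one piece $E$ has $\calL^n(E) > 0$, lies in $U$, and satisfies $\rmdiam E \leq \bdelta_{\ref{Z.positive}}(n,\Lambda)$. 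Proposition \ref{Z.positive} then yields $\calZ_E \bW(u) > 0$ for $\calL^n$ a.e.\ $u \in E$, and taking $B = E$ in the defining identity of $\calZ_E \bW$ from \ref{def.Z},
\begin{equation*}
\int_E \calH^m\bigl(E \cap \bW(x)\bigr)\,d\calL^n(x) = \phi_{E,\bW}(E) = \int_E \calZ_E\bW(u)\,d\calL^n(u) > 0 \,.
\end{equation*}
Hence $\{x \in E : \calH^m(E \cap \bW(x)) > 0\}$ has positive $\calL^n$ measure; since $E \subset A$ implies $\calH^m(A \cap \bW(x)) \geq \calH^m(E \cap \bW(x))$, this contradicts (2).

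The main obstacle is the localization step in $(2) \Rightarrow (1)$: I must simultaneously produce a neighborhood $U$ on which a single Lipschitz constant $\Lambda$ controls the orthonormal frame, and extract from $A \cap U$ a Borel subset of positive $\calL^n$ measure whose diameter is below the threshold $\bdelta_{\ref{Z.positive}}(n,\Lambda)$ demanded by Proposition \ref{Z.positive}. The double countable covering (open neighborhoods from \ref{orth.frame}, then a fine cubical grid) together with the pigeonhole on $\calL^n(A) > 0$ resolves this at the cost of some bookkeeping.
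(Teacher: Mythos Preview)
Your argument is correct and matches the paper's approach: the paper proves $(2) \Rightarrow (1)$ in the direct (non-contrapositive) form of your contradiction argument, but the mechanism is identical --- localize so that the setting of \ref{31} holds, chop into Borel pieces of diameter at most $\bdelta_{\ref{Z.positive}}(n,\Lambda)$, and combine \ref{Z.positive} with the identity $\phi_{E,\bW}(E) = \int_E \calZ_E \bW\, d\calL^n$. One technical point to tighten: Proposition \ref{AC.1} is stated and proved only under the hypotheses of \ref{31}, which in particular require a Lipschitz orthonormal frame on an open set $U \supset E$; the sets $E_k = S \cap \bB(0,k)$ do not carry such a frame globally, so before invoking \ref{AC.1} in your $(1) \Rightarrow (3)$ you must first localize via the same countable cover by neighborhoods $U_j$ from \ref{orth.frame} that you already deploy in $(2) \Rightarrow (1)$ --- this is precisely what the paper does.
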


Recall our convention that $\bW(x) = x + \bW_0(x)$.

\begin{proof}
Since $\bG(n,m)$ is complete we can extend $\bW_0$ to the closure of $S$. Furthermore if the Theorem holds for $\rmClos S$ then it also holds for $S$. Thus there is no restriction to assume that $S$ is closed.
\par 
$(1) \Rightarrow (3)$. It follows from \ref{orth.frame} that each $x \in S$ admits an open neighborhood $U_x$ in $\Rn$ such that $\bW(x)$ can be associated with a Lipschitz orthonormal frame verifying all the conditions of \ref{31} for some $\Lambda_x > 0$. Since $S$ is Lindel\"of there are countably many $x_1,x_2,\ldots$ such that $S \subset \cup_j U_{x_j}$. Letting $E_j = S \cap U_{x_j}$ we infer from \ref{AC.1} that $\phi_{E_j,\bW}$ is absolutely continuous with respect to $\calL^n$. Thus if $\calL^n(A)=0$ then $\calH^m\left( A \cap \bW(x)\right)=0$ for $\calL^n$ almost every $x \in E_j$ by definition of $\phi_{E_j,\bW}$. Since $j$ is arbitrary the proof is complete.
\par
$(3) \Rightarrow (2)$ is trivial.
\par 
$(2) \Rightarrow (1)$ Let $A$ verify condition (3). It is enough to show that $\calL^n(A \cap \bB(0,r)) = 0$ for each $r > 0$. Fix $r > 0$ and define $S_r = S \cap \bB(0,r)$. Consider the $U_{x_j}$ defined in the second paragraph of the present proof; since $S_r$ is compact only finitely many of those, say $U_{x_1},\ldots,U_{x_N}$, cover $S_r$. Let $\Lambda = \max_{j=1,\ldots,N} \Lambda_{x_j}$. Partition each $U_{x_j}$, $j=1,\ldots,N$, into Borel sets $E_{j,k}$, $k=1,\ldots,
K_j$, such that $\rmdiam E_{j,k} \leq \bdelta_{\ref{Z.positive}}(n,\Lambda)$. It then follows from \ref{Z.positive} that
\begin{equation}
\label{eq.50}
\left( \calZ_{A \cap E_{j,k}} \bW \right)(u) > 0
\end{equation}
for $\calL^n$ almost every $u \in A \cap E_{j,k}$. Now fix $j$ and $k$. Observe that $\calH^m \left( A \cap E_{j,k} \cap \bW(x) \right) = 0$ for $\calL^n$ almost every $x \in A \cap E_{j,k}$. Thus $\phi_{A \cap E_{j,k},\bW}(A \cap E_{j,k}) =0$. Moreover,
\begin{equation*}
0 = \phi_{A \cap E_{j,k},\bW}\left(A \cap E_{j,k}\right) = \int_{A \cap E_{j,k}} \left( \calZ_{A \cap E_{j,k}} \bW \right)(u) d\calL^n(u) \,.
\end{equation*}
It follows from \eqref{eq.50} that $\calL^n(A \cap E_{j,k}) =0$. Since $j$ and $k$ are arbitrary, $\calL^n(A)=0$.
\end{proof}

\bibliographystyle{amsplain}
\bibliography{/Users/thierry/Documents/LaTeX/Bibliography/thdp}


\end{document}